\begin{document}

\newtheorem{thm}{Theorem}
\newtheorem{lem}[thm]{Lemma}
\newtheorem{claim}[thm]{Claim}
\newtheorem{cor}[thm]{Corollary}
\newtheorem{prop}[thm]{Proposition} 
\newtheorem{definition}[thm]{Definition}
\newtheorem{question}[thm]{Open Question}
\newtheorem{conj}[thm]{Conjecture}
\newtheorem{rem}[thm]{Remark}
\newtheorem{prob}{Problem}

\def\ccr#1{\textcolor{red}{#1}}
\def\cco#1{\textcolor{orange}{#1}}
\def\ccg#1{\textcolor{cyan}{#1}}

\newtheorem{ass}[thm]{Assumption}

\newtheorem{lemma}[thm]{Lemma}

\newcommand{\GL}{\operatorname{GL}}
\newcommand{\SL}{\operatorname{SL}}
\newcommand{\lcm}{\operatorname{lcm}}
\newcommand{\ord}{\operatorname{ord}}
\newcommand{\Tr}{\operatorname{Tr}}
\newcommand{\Span}{\operatorname{Span}}

\numberwithin{equation}{section}
\numberwithin{thm}{section}
\numberwithin{table}{section}

\def\vol {{\mathrm{vol\,}}}
\def\squareforqed{\hbox{\rlap{$\sqcap$}$\sqcup$}}
\def\qed{\ifmmode\squareforqed\else{\unskip\nobreak\hfil
\penalty50\hskip1em\null\nobreak\hfil\squareforqed
\parfillskip=0pt\finalhyphendemerits=0\endgraf}\fi}

\def \balpha{\bm{\alpha}}
\def \bbeta{\bm{\beta}}
\def \bgamma{\bm{\gamma}}
\def \blambda{\bm{\lambda}}
\def \bchi{\bm{\chi}}
\def \bphi{\bm{\varphi}}
\def \bpsi{\bm{\psi}}
\def \bomega{\bm{\omega}}
\def \btheta{\bm{\vartheta}}
\def \bmu{\bm{\mu}}
\def \bnu{\bm{\nu}}

\newcommand{\bfxi}{{\boldsymbol{\xi}}}
\newcommand{\bfrho}{{\boldsymbol{\rho}}}

\def\cA{{\mathcal A}}
\def\cB{{\mathcal B}}
\def\cC{{\mathcal C}}
\def\cD{{\mathcal D}}
\def\cE{{\mathcal E}}
\def\cF{{\mathcal F}}
\def\cG{{\mathcal G}}
\def\cH{{\mathcal H}}
\def\cI{{\mathcal I}}
\def\cJ{{\mathcal J}}
\def\cK{{\mathcal K}}
\def\cL{{\mathcal L}}
\def\cM{{\mathcal M}}
\def\cN{{\mathcal N}}
\def\cO{{\mathcal O}}
\def\cP{{\mathcal P}}
\def\cQ{{\mathcal Q}}
\def\cR{{\mathcal R}}
\def\cS{{\mathcal S}}
\def\cT{{\mathcal T}}
\def\cU{{\mathcal U}}
\def\cV{{\mathcal V}}
\def\cW{{\mathcal W}}
\def\cX{{\mathcal X}}
\def\cY{{\mathcal Y}}
\def\cZ{{\mathcal Z}}
\def\Ker{{\mathrm{Ker}}}

\def\sA{{\mathscr A}}

\def\NmQR{N(m;Q,R)}
\def\VmQR{\cV(m;Q,R)}

\def\Xm{\cX_m}

\def \A {{\mathbb A}}
\def \B {{\mathbb A}}
\def \C {{\mathbb C}}
\def \F {{\mathbb F}}
\def \G {{\mathbb G}}
\def \L {{\mathbb L}}
\def \K {{\mathbb K}}
\def \Q {{\mathbb Q}}
\def \Qbar {\overline{\mathbb Q}}
\def \R {{\mathbb R}}
\def \Z {{\mathbb Z}}
\def \P {{\mathbb P}}

\def \fA{\mathfrak A}
\def \fC{\mathfrak C}
\def \fL{\mathfrak L}
\def \fR{\mathfrak R}
\def \fS{\mathfrak S}

\def \fUg{{\mathfrak U}_{\mathrm{good}}}
\def \fUm{{\mathfrak U}_{\mathrm{med}}}
\def \fV{{\mathfrak V}}
\def \fG{\mathfrak G}
\def \f{\mathfrak G}

\def\e{{\mathbf{\,e}}}
\def\ep{{\mathbf{\,e}}_p}
\def\eq{{\mathbf{\,e}}_q}

 \def\\{\cr}
\def\({\left(}
\def\){\right)}
\def\fl#1{\left\lfloor#1\right\rfloor}
\def\rf#1{\left\lceil#1\right\rceil}

\def\Im{{\mathrm{Im}}}

\def \oF {\overline \F}

\newcommand{\pfrac}[2]{{\left(\frac{#1}{#2}\right)}}

\def \Prob{{\mathrm {}}}
\def\e{\mathbf{e}}
\def\ep{{\mathbf{\,e}}_p}
\def\epp{{\mathbf{\,e}}_{p^2}}
\def\em{{\mathbf{\,e}}_m}

\def\Res{\mathrm{Res}}
\def\Orb{\mathrm{Orb}}

\def\vec#1{\mathbf{#1}}
\def \va{\vec{a}}
\def \vb{\vec{b}}
\def \vc{\vec{c}}
\def \ve{\vec{e}}
\def \vs{\vec{s}}
\def \vu{\vec{u}}
\def \vv{\vec{v}}
\def \vw{\vec{w}}
\def \vell{\boldsymbol{\ell}}
\def\vlam{\vec{\lambda}}
\def\flp#1{{\left\langle#1\right\rangle}_p}

\def\Mat{\cM}

\def\mand{\qquad\mbox{and}\qquad}

\title[Multiplicative dependence of matrices]
{Integer matrices with a given characteristic polynomial and multiplicative dependence of matrices}

\author[P. Habegger] {Philipp Habegger}
\address{Department of Mathematics and Computer Science, University of Basel, 
Spiegelgasse 1, 4051 Basel, Switzerland}
\email{philipp.habegger@unibas.ch}

\author[A. Ostafe] {Alina Ostafe}
\address{School of Mathematics and Statistics, University of New South Wales, Sydney NSW 2052, Australia}
\email{alina.ostafe@unsw.edu.au}

\author[I. E. Shparlinski] {Igor E. Shparlinski}
\address{School of Mathematics and Statistics, University of New South Wales, Sydney NSW 2052, Australia}
\email{igor.shparlinski@unsw.edu.au}

\begin{abstract}  We consider the set  $\cM_n\(\Z; H\)$ of $n\times n$-matrices with 
integer elements of size at most $H$ and obtain  a new upper bound on the number of matrices  from $\cM_n\(\Z; H\)$
with a given characteristic polynomial $f \in \Z[X]$,  which is uniform with respect to $f$. This complements 
the asymptotic formula of A.~Eskin, S.~Mozes and  N.~Shah (1996) in which $f$ has to be fixed and irreducible. 

Using this result, among others, we obtain upper and lower bounds on the number of $s$-tuples
of matrices  
 from $\cM_n\(\Z; H\)$, satisfying  various multiplicative relations, including 
  multiplicative  dependence and 
 bounded generation of a subgroup of $\GL_n(\Q)$.  These problems generalise those studied 
in the scalar case $n=1$ by F.~Pappalardi, M.~Sha, I.~E.~Shparlinski and C.~L.~Stewart (2018) with an 
obvious distinction due to the non-commutativity of matrices.   

Motivated by these problems, we also prove various properties of the variety of complex matrices with fixed characteristic polynomial, including computing the degree of this variety. 
\end{abstract}

\subjclass[2020]{11C20, 15B36, 15B52}

\keywords{Matrices, multiplicative dependence, matrix equation, abelianisation}

\maketitle

\tableofcontents

\section{Introduction}
\subsection{Set-up and motivation}
For a positive integer $n$  let $\cM_n\(\Z\)$  denote the
set of all  $n\times n$ matrices with integer elements. 
Furthermore, for  a real $H\ge 1$ we use $\cM_n\(\Z; H\)$  to denote the
set  of matrices
\[
A = (a_{ij})_{i,j=1}^n \in \cM_n\(\Z\)
\]  
with integer entries of size $|a_{ij}| \le H$.  In particular, $\cM_n\(\Z; H\)$ is 
of cardinality $\# \cM_n\(\Z; H\) = \(2\fl{H} +1\)^{n^2}$.

Here we consider some questions of {\it arithmetic statistics\/} with matrices from 
$\cM_n\(\Z; H\)$ when $n$ is fixed and $H\to \infty$. We note that the dual situation, 
where the entries of matrices are drawn from a small fixed set, for example, from $\{0,1\}$ 
but $n\to \infty$ has recently received 
a lot of attention, see~\cite{BVW, CJMS, Eber, FLM, FJSS, JSS, Kop, KwSa, NgPa, NgWo, TaVu, Wo} and references therein as well as the recent survey~\cite{Vu}. The case we investigate here, that is, of fixed $n$ and $H\to \infty$,  has also been 
investigated, for example, in~\cite{DRS,E-BLS,EskKat,EMS, Kat2,Kat-Sing} 
but it seems to be significantly less studied.  Thus we hope that this work may attract more interest to this direction. 

  \subsection{Matrices with a given characteristic polynomial} 
Motivated by various applications, including studying the multiplicative structure of matrices as described below, 
we obtain an upper bound on the number $R_{n}(H;f)$
  of matrices $A \in \cM_n\(\Z; H\)$ with a given characteristic polynomial $f \in \Z[X]$, 
 defined as $f(X) = \det(XI_n - A)$,  which is
  of course a question of independent interest. 
  
In the case when  
 $f \in \Z[X]$  is   a monic irreducible polynomial, Eskin,  Mozes and   Shah~\cite{EMS} have given an asymptotic formula for a variant 
 $\widetilde R_{n} (H;f)$
 of $R_{n} (H;f)$, where  the matrices are ordered 
by the $L_2$-norm rather than by the $L_\infty$-norm, but this should not be very essential and plays no
role in our context as we are only interested in upper bounds for $R_{n} (H;f)$. 
Namely, by~\cite[Theorem~1.3]{EMS},
\begin{equation}
\label{eq:Asymp}
\widetilde R_{n} (H;f) = (C(f) + o(1)) H^{n(n-1)/2},
\end{equation}
with some constant $C(f)>0$ depending on $f$, 
when a monic  irreducible polynomial  $f \in \Z[X]$  is fixed. 
This result of~\cite{EMS} as
well as its variants  obtained via different approaches in~\cite{Shah, WeXu} (see also the result of~\cite{ShapZhe} in the case when $f$ splits completely over $\Q$)  are not sufficient for our purposes because we need an upper bound which: 
\begin{itemize}
\item   holds for arbitrary  $f \in \Z[X]$, which is not necessary irreducible;
\item   is uniform with respect to the coefficients of $f$.
\end{itemize}

The results of~\cite{EMS,Shah, WeXu} lead us towards
the following conjecture. 

\begin{conj}
\label{conj:CharPoly}
Uniformly over polynomials $f$ of degree $n$ we have 
\[
R_{n}(H;f) \le H^{n(n-1)/2+o(1)},
\]
as $H \to \infty$.   
\end{conj}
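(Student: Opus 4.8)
A plausible route to Conjecture~\ref{conj:CharPoly} proceeds through the classical dictionary between matrices with a prescribed characteristic polynomial and lattices. By the Cayley--Hamilton theorem, a matrix $A\in\cM_n(\Z)$ with characteristic polynomial $f$ turns $\Z^n$ into a module over the ring $\fR_f=\Z[X]/(f)$ via $X\cdot v=Av$, so $R_n(H;f)$ is exactly the number of $\fR_f$-module structures on $\Z^n$ whose structure matrix (in the standard basis) has characteristic polynomial $f$ and entries of size at most $H$. Grouping these by $\fR_f$-isomorphism type --- say the classes are $[\Lambda]$ --- gives
\[
R_n(H;f)\ \le\ \sum_{[\Lambda]}\,\#\bigl\{A\in\cM_n(\Z;H):~A\ \text{represents}\ [\Lambda]\bigr\},
\]
and the plan is to bound each inner term by reduction theory, in the spirit of the arguments of~\cite{EMS,Shah,WeXu} that yield~\eqref{eq:Asymp}, and then to sum over the finitely many classes.

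First I would treat the generic case, that $f$ is \emph{separable} (equivalently, squarefree in $\Q[X]$). Then the minimal polynomial of any such $A$ equals $f$, so $A$ is cyclic and $\Q$-conjugate to the companion matrix $C_f$, and the classes $[\Lambda]$ are precisely the fractional ideal classes of the (in general non-maximal) order $\fR_f$ inside the \'etale $\Q$-algebra $\Q[X]/(f)$ --- this is the Latimer--MacDuffee correspondence. Writing $A=gC_fg^{-1}$ with $g\in\GL_n(\Q)$ and $\Lambda=g^{-1}\Z^n$, the condition $A\in\cM_n(\Z;H)$ says that $\Lambda$ is an $\fR_f$-submodule of $\Q^n\cong\Q[X]/(f)$ whose shape, measured in $\Q[X]/(f)\otimes_\Q\R$, is balanced relative to $H$. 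The volume estimate underlying~\eqref{eq:Asymp} then bounds the number of height-$\le H$ matrices arising from a single ideal class by $H^{n(n-1)/2+o(1)}$, and shows that a class contributes nothing unless it admits a representative of covolume at most a fixed power of $H$. Summing over the ideal classes of $\fR_f$ --- there are at most $|\mathrm{disc}(f)|^{1/2+o(1)}$ of them, while $|\mathrm{disc}(f)|\le H^{O(n^2)}$ as soon as $\cM_n(\Z;H)$ meets the orbit at all (the coefficients of $f$ being then $\le H^{n+o(1)}$) --- and using that the per-class contribution decays with the covolume of its minimal representative should give $R_n(H;f)\le H^{n(n-1)/2+o(1)}$ for separable $f$.

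For general $f$ I would stratify the matrices with characteristic polynomial $f$ by their rational Jordan type over $\overline{\Q}$: for each irreducible factor $g\mid f$, the partition of its multiplicity recording the sizes of the corresponding Jordan blocks. On a fixed stratum, $A$ preserves the canonical filtration of $\Q^n$ by the subspaces $\Ker g(A)^i$, whose graded pieces carry semisimple actions with separable minimal polynomials; bounding the number of integral filtrations of bounded height and of the compatible off-diagonal data should reduce the count to the separable case in smaller dimension, by induction on $n$ --- the inductive step peeling off a primitive $A$-invariant sublattice and hence a factorisation of $f$. The base case, and the hardest stratum, is $f=(X-c)^n$: here $A=cI_n+N$ with $N$ nilpotent, so one needs the thin-set estimate that $\cM_n(\Z;H)$ contains at most $H^{n(n-1)/2+o(1)}$ nilpotent matrices --- which I would attack by parametrising $N$ through its integral kernel flag $\Ker N\subseteq\Ker N^2\subseteq\cdots$ and the injections induced between successive quotients, extracting the saving from a divisor bound, exactly as for $n=2$, where $N$ nilpotent forces $a_{12}a_{21}=-a_{11}^2$ and hence the count is $\sum_{|a_{11}|\le H}\tau\(a_{11}^2\)+O(H)=H^{1+o(1)}$.

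The main obstacle is uniformity in $f$. The results of~\cite{EMS,Shah,WeXu} establish~\eqref{eq:Asymp} only for a \emph{fixed} separable irreducible $f$, and in~\cite{EMS} the error term, obtained by ergodic methods, is ineffective; to reach the conjecture the volume factor $C(f)$ --- and the threshold beyond which the asymptotic takes hold --- must be shown to be $H^{o(1)}$ uniformly in $f$, which is tied to the bound $|\mathrm{disc}(f)|\le H^{O(n^2)}$. This requires an effective, $f$-uniform reduction-theory count, with uniform control of the non-compact ``cusp'' directions of $\GL_n(\R)/Z_{\GL_n(\Q)}(C_f)$ and of the degenerate strata where the centraliser collapses, together with the nilpotent thin-set estimate above; that is where essentially all of the difficulty lies, which is why the bound is stated only as a conjecture. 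Finally, purely Diophantine input cannot suffice on its own: the locus $\det(XI_n-A)=f$ inside the space of $n\times n$ matrices has dimension $n^2-n$, so general point counting --- for instance the determinant method --- gives only $H^{n^2-n-1+o(1)}$, and the conjecture asks for a further saving of size $H^{n(n-1)/2}$, which can only come from the module structure.
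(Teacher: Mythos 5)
The statement you were asked to prove is Conjecture~\ref{conj:CharPoly}, which the paper does not prove; it is left open, and the authors offer only partial unconditional results (Theorems~\ref{thm:ass gamma_23} and~\ref{thm:ass gamma_n}, giving the conjectured exponent for $n=2$ and the much weaker saving $\gamma_n\ge 1/(n-3)^2$ for $n\ge 4$). You correctly recognise that the bound is conjectural and present a plan of attack rather than a proof; that is the right reading of the statement, and your closing remarks correctly locate where a proof would have to go.

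Your sketched route --- Cayley--Hamilton, the Latimer--MacDuffee dictionary between cyclic integral matrices with characteristic polynomial $f$ and fractional-ideal classes of the order $\Z[X]/(f)$, followed by a per-class orbit count and a stratification by rational Jordan type for non-separable $f$ --- is genuinely different from anything in the paper. The paper's unconditional progress is entirely elementary: for $n=2$ it is a one-line divisor-function argument (Section~8.1), and for $n\ge 4$ it is a reduction to counting singular bordered matrices $\sA$ of the shape~\eqref{eq:Matr A}, handled by geometry of numbers in the style of Katznelson (lattice-point counts, the dichotomy of $K$-good/$K$-bad annihilating vectors, Lemmas~\ref{lem: LattPoints} and~\ref{lem:sum bad}). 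No ideal classes, no orders, no Jordan strata appear anywhere. What your approach buys is conceptual: it explains \emph{why} the exponent should be $n(n-1)/2$ (it is the dimension of the symmetric space, coming from the reduction-theoretic volume in~\eqref{eq:Asymp}), whereas the paper's method is tailored to extract a nonzero saving uniformly in $f$, without aiming at the sharp exponent.

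The one substantive caution in your outline concerns the sum over ideal classes. You correctly note that their number may be as large as $|\mathrm{disc}(f)|^{1/2+o(1)}$; since generically $|\mathrm{disc}(f)|$ can itself be of size $H^{c n^2}$ for some $c>0$ once $A\in\cM_n(\Z;H)$, a naive ``$H^{n(n-1)/2+o(1)}$ per class times the class number'' estimate overshoots the conjectured bound by an unbounded power of $H$. Your rescue --- that the per-class contribution should ``decay with the covolume of its minimal representative'', so that only $H^{o(1)}$ classes contribute nontrivially at height $H$ --- is exactly the hard uniform equidistribution/reduction-theory statement that is missing from the literature cited in the paper (\cite{EMS,Shah,WeXu} are all for \emph{fixed} $f$, and the error term in~\cite{EMS} is ineffective). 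You flag this honestly, so there is no error of logic in your proposal, but the reader should understand that this single step carries the full weight of the conjecture and is not within reach of the paper's toolkit; the same is true of the nilpotent-stratum estimate you state as a sub-goal, which beyond $n=2$ is itself open at the required level of uniformity.
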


In particular, the bound of Conjecture~\ref{conj:CharPoly} holds for  any fixed 
polynomial $f$ of the special type considered in~\cite{EMS,Shah, WeXu}.

Unconditionally, only counting matrices with a  given determinant and 
applying~\cite[Theorem~4]{Shp}, see also  Lemma~\ref{lem: Det} below,
we instantly obtain 
\begin{equation}
\label{eq:TrivBound}
R_{n} (H;f) \le H^{n^2 - n+o(1)}.
\end{equation}
Furthermore, we also show that  Conjecture~\ref{conj:CharPoly}  holds for $n =2$, see Theorem~\ref{thm:gamma23}.

We define $\gamma_n$ as the largest real number 
such that 
uniformly over polynomials $f$ we have 
\begin{equation}
\label{eq:CharPoly}
R_{n} (H;f) \le H^{n^2-n -\gamma_n +o(1)},
\end{equation}
as $H \to \infty$. 

We first note that 
\begin{equation}
\label{eq:gamma_n low}
\gamma_n\le n(n-1)/2.
\end{equation} 
Indeed, to see this we count the number of possible characteristic polynomials $X^n+\ell_1 X^{n-1}+\cdots+\ell_n$ of matrices in $\cM_n\(\Z;H\)$, which is $O\(H^{n(n+1)/2}\)$ since  $\ell_i\ll H^{i}$, $i=1,\ldots,n$. Therefore, for some characteristic polynomial $f$, 
\[
R_{n} (H;f) \gg H^{n^2-n(n+1)/2}=H^{n^2-n-n(n-1)/2},
\]
which implies the upper bound on $\gamma_n$ (the notations $\ll$ and $\gg$ are defined in Section~\ref{sec:not}).
Clearly  the value  $\gamma_n = n(n-1)/2$ corresponds to Conjecture~\ref{conj:CharPoly} 
while by~\eqref{eq:TrivBound} we have $\gamma_n \ge 0$.

We remark that even with this trivial inequality $\gamma_n\ge 0$  our results are still nontrivial, but 
of course improve when $\gamma_n$ grows.

In particular, we are able to prove that~\eqref{eq:CharPoly} holds with  some explicit strictly positive lower bound on $\gamma_n$, see Theorems~\ref{thm:gamma23} and~\ref{thm:gamman} below. 

Finally, we recall that the question of counting  matrices with a given characteristic polynomial
over a finite field has been completely solved by  Gerstenhaber~\cite{Ger} and Reiner~\cite{Rei}, see also Lemma~\ref{lem:Rei}. 

\subsection{Multiplicative relations between matrices} 
The  
specific pro\-blems 
we investigate here have a common underlying motif  of  various multiplicative relations between matrices from $\cM_n\(\Z; H\)$. 
Some of the results we obtain depend on the quality of the estimates in Theorems~\ref{thm:gamma23} and~\ref{thm:gamman} below.

We say that an $s$-tuple of {\it non-singular\/} matrices $(A_1, \ldots, A_s)$ is {\it multiplicatively 
dependent\/} if there is a non-zero vector $(k_1, \ldots, k_s)\in \Z^s$ such that 
\begin{equation}
\label{eq:MultDep}
A_1^{k_1}\ldots A_s^{k_s} = I_n,
\end{equation}
where $I_n$ is the $n\times n$ identity matrix.  

Our methods often apply to singular matrices as well when assuming $k_i\ge 0$ in~\eqref{eq:MultDep} 
 and with the standard convention $A^0= I_n$ for any $n\times n$ matrix $A$. 
However, to avoid unnecessary discussion of this case we limit 
ourselves to only non-singular matrices.

In particular, our motivation to study multiplicatively dependent matrices comes from 
recent work on multiplicatively dependent integers (and also algebraic integers), see~\cite{PSSS,KSSS,St}.
The matrix version of this problem is however of very different spirit and requires different 
methods. The most obvious distinction between the matrix and scalar cases is of course 
the non-commutativity of matrix multiplication.  In particular, the property of multiplicative dependence may change 
if the entries of  $(A_1, \ldots, A_s)$  are permuted. However,  since we always assume that $s$ is fixed, allowing permutations of $A_1, \ldots, A_s$ 
in our definition of multiplicative dependence would not 
affect our bounds  in Theorems~\ref{thm:MaxRank} and~\ref{thm:MultDep} of such tuples. 
Another important distinction is the lack of one of the main tools of~\cite{PSSS}, namely the existence and uniqueness of prime number factorisation.

We note that the notion of multiplicative dependence given by~\eqref{eq:MultDep} is also 
motivated by the notion of {\it bounded generation\/}. Namely, we say that a subgroup $\Gamma\subseteq \GL_n(\Q)$ is boundedly generated if for some 
$A_1, \ldots, A_s \in \GL_n(\Q)$ we have
\[
\Gamma = \{A_1^{k_1}\ldots A_s^{k_s}:~k_1, \ldots, k_s \in \Z\}, 
\]
see~\cite{CRRZ} and references therein.   
For a non-commutative group, 
 boundedly generated is a  stronger condition than {\it finitely generated\/}.

Here we use a different approach to establish nontrivial upper and lower bounds on 
the cardinality of the set  $\cN_{n,s}(H)$ of multiplicatively dependent 
$s$-tuples $(A_1, \ldots, A_s) \in \cM_n\(\Z; H\)^s$. 
It is  convenient and natural to consider the subset  $\cN_{n,s}^*(H)$
of $\cN_{n,s}(H)$ which consists of multiplicatively dependent 
$s$-tuples $(A_1, \ldots, A_s) \in \cN_{n,s}(H)$ of maximal rank  $s-1$, 
 that is, such that 
any sub-tuple $\(A_{i_1}, \ldots, A_{i_t}\)$ of length $t < s$ with $1 \le i_1 < \ldots  <i_t \le s$ is multiplicatively independent.

Furthermore, the non-commutativity of matrices suggests yet another variation of the 
above questions. Namely,  we say that an $s$-tuple $(A_1, \ldots, A_s) \in \cM_n\(\Z; H\)^s$ 
of non-singular matrices 
is {\it free\/} if 
\[
A_{i_1}^{\pm 1} \cdots A_{i_L}^{\pm 1} \ne I_n
\]
for any  nontrivial 
reduced word   in $A_1^{\pm 1}, \ldots, A_s^{\pm 1}$, that is, a word without  
occurrences of the form $A_i A_i^{-1}$,  of any length  $L \ge 1$. Unfortunately we do not have nontrivial upper bounds 
on  the number of non-free  $s$-tuples $(A_1, \ldots, A_s) \in \cM_n\(\Z; H\)^s$, unless when $n=2$ and $A_i\in \SL_2(\Z;H)$, in which case matching lower and upper bounds are given in~\cite[Corollary~1.2]{BOS}. 
However we can estimate the number of such $s$-tuples  with the additional
condition  that if 
\begin{equation}
\label{eq:Nonfree}
A_{i_1}^{\pm 1} \cdots A_{i_L}^{\pm 1} = I_n,\quad i_1,\ldots,i_L\in \{1,\ldots,s\},
\end{equation} 
then for at least one $i =1, \ldots, s$ the $\pm 1$ exponents of $A_i$ 
in~\eqref{eq:Nonfree}  do not sum up to zero.  In particular, we denote by $\cK_{n,s}(H)$
the set of $s$-tuples $(A_1, \ldots, A_s) \in \cM_n\(\Z; H\)^s$ which satisfy the above condition.

To understand the meaning of this condition
we recall that the abelianisation  of a group $\cG$ is the map 
\begin{equation}
\label{eq:AbMap}
\cG \to \cG/[\cG,\cG],
\end{equation}
where $[\cG,\cG]$ is the commutator subgroup of $\cG$. 
In particular, all words $g_{i_1}^{\pm 1} \cdots g_{i_L}^{\pm 1}$, where 
for every $i$, the exponents of $g_i$ sum up to zero, are mapped to the 
identity element of the factor group $ \cG/[\cG,\cG]$ and thus are in the
kernel of this map.   Hence, in other words, $\cK_{n,s}(H)$ is
the set of $s$-tuples $(A_1, \ldots, A_s) \in \cM_n\(\Z; H\)^s$ such that the kernel of the 
abelianisation map~\eqref{eq:AbMap} of the 
group they generate contains also words which are not of this
type.

\subsection{Notation and conventions}
\label{sec:not}
We recall that  the notations $U = O(V)$, $U \ll V$ and $ V\gg U$  
are equivalent to $|U|\leqslant c V$ for some positive constant $c$, 
which throughout this work, may depend only on $n$ and $s$, and 
also on the  the degree, dimension and the number of variables and 
polynomials defining varieties in Section~\ref{sec:rat point}.

We also write $U = V^{o(1)}$ if for any fixed $\varepsilon>0$  we have 
$V^{-\varepsilon} \le |U |\le V^{\varepsilon}$ provided that $V$ is large 
enough.  Similarly $U = (1+ o(1))V$ means that  for any fixed $\varepsilon>0$ we have 
$|U-V| \le  \varepsilon |V|$ provided that $V$ is large enough.

For a finite set $\cS$ we use $\# \cS$ to denote its cardinality. 

For an integer $k\ne 0$ we denote by $\tau(k)$ the number of  positive
integral divisors of $k$, 
for which we very often use the well-known bound
\begin{equation}
\label{eq:tau}
\tau(k) = |k|^{o(1)} 
\end{equation}
as $|k| \to \infty$, see~\cite[Equation~(1.81)]{IwKow}.

We always assume that the variables of summation run over integers. Thus, for example
\[
\sum_{u \in [-H,H]} *\!*\!* \quad  = \quad  \sum_{u \in [-H,H]\cap\Z} *\!*\!*,
\]
and similarly for multivariate sums. 

When we say that something holds uniformly over polynomials $f$, 
we mean that neither the implied constants nor the decay of the quantity $o(1)$
depend on a choice of $f$ (and similarly for other parameters). 

Furthermore, in all our asymptotic statements, for the involving $o(1)$ we always 
assume that  $H \to \infty$. 

As usual for a matrix (or a vector) $A$ we use $A^t$ to denote the transposition of $A$.

  \section{Main results}

\subsection{Counting  matrices with a given characteristic polynomial}

We start with bounding $\gamma_n$ which appears in~\eqref{eq:CharPoly} since 
this parameter appears in several other results of the paper and of course such 
bounds are of independent interest. 

We are able to show the following results towards  Conjecture~\ref{conj:CharPoly}. 
We start with the cases of $n =2,3$.

\begin{thm} \label{thm:gamma23} 
We have 
\[
\gamma_2 = 1 \mand \gamma_3 \ge 2.
\]
\end{thm}    

Finally, for $n \ge 4$ we have a slightly weaker result. 

\begin{thm} \label{thm:gamman} 
For $n \ge 4$, we have 
\[
\gamma_n \ge   1. 
\]
\end{thm} 

\begin{rem} 
\label{rem:dim growth}
It may be possible that the recent dimension growth
result~\cite[Theorem~1.2]{Verm} for affine varieties can provide 
an alternative proof of Theorem~\ref{thm:gamman}. One would need to 
  establish   that the variety of matrices with fixed characteristic
  polynomial is not cylindrical over a curve. Our method
  exploits the structure of the matrices in question. We
  hope that it can possibly lead to further improvements and generalisations. 
\end{rem}

\begin{rem}
We also notice that for $n\ge 2$, applying Proposition~\ref{prop:degreenfac} in Appendix~\ref{app: deg Vf} and~\cite[Theorem~A]{Pila}, one can easily obtain  the bound
\[
\gamma_n \ge   1-\frac{1}{n!}. 
\]
Furthermore,  thanks to the uniformity of~\cite[Theorem~A]{Pila}, it  implies that for any $n\times n$ matrix $K$ with 
real entries (of arbitrary size) and any polynomial $f \in \Z[X]$, the number of matrices
in the shifted set $K + \cM_n\(\Z; H\)$, for which $f$ is the characteristic polynomial, is bounded by $O\(H^{n^2 - n - 1 + 1/n!}\)$, 
uniformly over $K$ and $f$. We would like to emphasise that we do not impose any integrality conditions on $K$. However, 
we have to assume  $f \in \Z[X]$ since Propositions~\ref{prop:irred scheme} and~\ref{prop:degreenfac} are  
only established for integers polynomials (which stems from the reduction argument in the proof of Lemma~\ref{lem:IrredDimDeg}).
\end{rem}

\subsection{Counting multiplicatively dependent matrices} 
\label{sec:md matr}
We begin with   bounds on the size of $\cN_{n,s}^*(H)$.
 
\begin{thm}
\label{thm:MaxRank}
We have 
\[
  H^{ sn^2  - \rf{s/2} n+o(1)} \ge  \# \cN_{n,s}^*(H)  \ge
 \begin{cases} H^{(s-1)n^2/2 +n/2 + o(1)}, & \text{if $s$ is even},\\
 H^{(s-1)n^2/2  + o(1)}, & \text{if $s$ is odd}.
 \end{cases} 
\]
\end{thm}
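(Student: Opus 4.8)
The plan is to estimate $\#\cN_{n,s}^*(H)$ from above and below separately, in each case by reducing to the scalar-type combinatorics of the exponent vector $(k_1,\dots,k_s)$ combined with the count $R_n(H;f)$ of matrices with a given characteristic polynomial.

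\medskip

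\textbf{Upper bound.}
Fix a multiplicatively dependent $s$-tuple of maximal rank, so there is a nonzero $(k_1,\dots,k_s)\in\Z^s$ with $A_1^{k_1}\cdots A_s^{k_s}=I_n$, and maximality forces \emph{every} coordinate $k_i$ to be nonzero (otherwise some proper sub-tuple would be dependent). The key observation is that taking determinants in \eqref{eq:MultDep} gives $\prod_i(\det A_i)^{k_i}=1$, which already constrains the tuple, but more usefully: since all $k_i\ne 0$, for at least $\lceil s/2\rceil$ of the indices $i$ we can solve for $A_i$ (or a power of it) in terms of the others, or alternatively at least half the matrices are forced to satisfy a polynomial relation with ``few" free parameters. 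Concretely, I would argue that given the other $s-1$ matrices and the exponent vector, the equation $A_i^{k_i}=M$ for a fixed matrix $M$ pins down the characteristic polynomial of $A_i$ (via the characteristic polynomial of $M$ and the value $k_i$), so by the definition of $\gamma_n$ in \eqref{eq:CharPoly}, $A_i$ lies in a set of size $H^{n^2-n-\gamma_n+o(1)}$ — but even with $\gamma_n\ge 0$ this is only $H^{n^2-n+o(1)}$, which combined naively over roughly half the indices and the trivial count $H^{n^2}$ over the other half gives $H^{sn^2-\lceil s/2\rceil n+o(1)}$ after summing over the $H^{o(1)}$ relevant bounded exponent vectors (one shows the minimal relation has exponents of size $H^{O(1)}$, so $\tau$-type bounds \eqref{eq:tau} absorb the count of exponent vectors). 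The pairing ``half the matrices are free, half lose a factor $H^n$" is exactly what produces the $-\lceil s/2\rceil n$ saving; making this pairing rigorous — deciding which sub-tuple is ``independent data" and which is ``determined data" uniformly over all dependent tuples — is the main obstacle, and I expect it requires a careful case analysis on the support and a Siegel-type bound on the height of a minimal relation.

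\medskip

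\textbf{Lower bound.}
Here I would construct many dependent tuples explicitly. The cheapest source: take $A_1,\dots,A_{s-1}$ arbitrary in $\cM_n(\Z;H)$ and set $A_s$ to make the relation $A_1 A_2\cdots A_{s-1}A_s = I_n$ hold, i.e. $A_s=(A_1\cdots A_{s-1})^{-1}$; but this $A_s$ is almost never integral with entries $\le H$, so instead one uses a relation that can be satisfied by construction. A robust choice is to take $A_1,\dots,A_{s-1}$ ranging over a positive-density subset so that $B=A_1\cdots A_{s-1}$ has a bounded power equal to $I_n$ (e.g. build them from a fixed finite-order block), or—cleaner—use the relation $A_1^{k_1}A_2^{k_2}=I_n$ type only on a pair and let the rest be free; to get maximal rank one must be slightly more careful. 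The even/odd dichotomy in the exponent $(s-1)n^2/2$ (plus $n/2$ when $s$ even) strongly suggests pairing the $s$ matrices into $\lfloor s/2\rfloor$ pairs $(A_{2j-1},A_{2j})$ with $A_{2j}=A_{2j-1}^{-1}$ when this is integral — the set of $A\in\cM_n(\Z;H)$ with $A^{-1}$ also in $\cM_n(\Z;H)$ (roughly, $\det A=\pm1$ with bounded inverse) has size $H^{n^2-n+o(1)}$ or so by counting unimodular matrices, which per pair contributes that many choices; $\lfloor s/2\rfloor$ such pairs give $H^{\lfloor s/2\rfloor(n^2-n)+\cdots}$, and when $s$ is odd the leftover matrix is free, adding $n^2$. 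Matching this to $(s-1)n^2/2$ requires using a richer family than exact inverses — likely $A_{2j}$ chosen among the $H^{n^2/2+o(1)}$-sized set of matrices whose product with $A_{2j-1}$ is finite-order, or an affine family — and checking that maximal rank holds for the generic member (which it does, since a generic pair is multiplicatively independent even though the specific product vanishes in the relevant power).

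\medskip

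\textbf{Main difficulty.}
The real work is in the upper bound: translating ``maximal-rank dependent" into a usable combinatorial/geometric statement, bounding the height of the minimal relation $(k_1,\dots,k_s)$ so that the number of exponent vectors is $H^{o(1)}$ (this is where a Masser–Wüstholz or Schlickewei-type bound, or a direct argument via $\det$ and eigenvalue growth, enters), and then organizing the count so that exactly $\lceil s/2\rceil$ matrices each save a factor $H^n$. I expect the cleanest route is: peel off one matrix, say $A_s$, express $A_s^{k_s}$ as a word in the others, deduce its characteristic polynomial is determined, apply $R_n(H;f)\le H^{n^2-n+o(1)}$ (the trivial $\gamma_n\ge 0$ bound \eqref{eq:TrivBound} suffices for the stated theorem), then induct on a sub-tuple of size $s-2$ (not $s-1$, to get the $\lceil s/2\rceil$ rather than $s-1$ savings) after also fixing a second matrix by a similar argument.
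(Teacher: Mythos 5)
The paper's proof of the upper bound stays entirely within the determinant relation that you correctly write down at the start, but you then pivot away from it, and the pivot is where the gaps open up. First, your plan requires the minimal relation $(k_1,\dots,k_s)$ to have only $H^{o(1)}$ possibilities, and you assert that exponents of size $H^{O(1)}$ are absorbed by $\tau$-type bounds. That does not follow: a box of side $H^{O(1)}$ in $\Z^s$ has $H^{O(s)}$ lattice points, not $H^{o(1)}$. Worse, the paper itself (Section~\ref{sec:com}, the example~\eqref{eq:Bad Exp}) shows that for $n\ge 2$ the minimal exponents can genuinely be as large as $H$, so no logarithmic bound on exponents is available in the matrix setting — the Loxton--van der Poorten mechanism that works for scalars breaks here, which is precisely why the paper's route avoids exponents altogether. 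Second, the claim that $A_i^{k_i}=M$ ``pins down the characteristic polynomial of $A_i$'' is false: each eigenvalue of $A_i$ is only determined up to a $k_i$-th root of an eigenvalue of $M$, so there are up to $k_i^n$ candidate characteristic polynomials, and $k_i$ is unbounded. Third, the proposed induction on a sub-tuple of size $s-2$ cannot get started: by the maximal-rank hypothesis, \emph{every} proper sub-tuple is multiplicatively independent, so there is no dependency left to feed into an inductive step. What the paper actually does is partition the indices $\{1,\dots,s\}$ into $\cI$ and $\cJ$ according to the sign of $k_i$ (all $k_i\neq 0$ by maximality), fix the $\le s/2$ matrices on the smaller side $\cJ$ freely in $O(H^{Jn^2})$ ways, note that every $\det A_i$ for $i\in\cI$ is built only from the primes of $Q=\prod_{j\in\cJ}\det A_j$, and then invoke Lemma~\ref{lem: Sunits} ($F(Q,U)=(QU)^{o(1)}$) followed by Lemma~\ref{lem: Det} to get $H^{n^2-n+o(1)}$ choices for each $A_i$, $i\in\cI$. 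The exponent vector never needs to be counted.

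For the lower bound, the inverse-pair idea ($A_{2j}=A_{2j-1}^{-1}$) forces $\det A_{2j-1}=\pm1$ and bounded inverse, which is a thin set, and the ``richer family'' alternatives you sketch (finite-order products, affine families) are not formulated precisely enough to count. The paper's construction is quite different and more elementary: take auxiliary matrices $B_1,\dots,B_s\in\cM_n(\Z;K)$ with $K\approx\sqrt{H}$, and define $A_{2i-1}=B_{2i-1}B_{2i}$, $A_{2i}=B_{2i+1}B_{2i}$ (indices cyclic when $s$ is even; a slight variant with $B_0=I_n$ when $s$ is odd). The product $A_1A_2^{-1}\cdots$ then telescopes to $I_n$, and the entries of the $A_i$ are automatically $O(H)$ because each is a product of two matrices of size $O(\sqrt{H})$. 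Maximal rank is enforced by choosing the $B_j$ so that each $\det B_j$ has a prime divisor not dividing the earlier ones (again via Lemma~\ref{lem: Sunits}, a positive-proportion condition), and the only remaining work is to bound the multiplicity of the map $(B_1,\dots,B_s)\mapsto(A_1,\dots,A_s)$, which is $K^{n^2-n+o(1)}$ by the divisor bound applied to $\det A_1=\det B_1\det B_2$. Putting this together gives exactly $H^{(s-1)n^2/2+n/2+o(1)}$ for even $s$ and $H^{(s-1)n^2/2+o(1)}$ for odd $s$. So while your general instinct (pair the matrices, look for a hidden $\sqrt{H}$-sized parameter) is pointing in the right direction, the concrete telescoping construction and the unique-prime-divisor device for maximal rank are missing, and those are the load-bearing parts.
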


Next, we use  Theorem~\ref{thm:MaxRank} to obtain an upper bound for  $\# \cN_{n,s}(H)$.

To present a lower bound we need to recall that an integer $m$ is called a {\it totient\/} if it is 
a value of the Euler function $m = \varphi(k)$ for some integer $k$. The
best known result about the density of the set of totients is due to Ford~\cite{Ford1,Ford2}. However for us it is more important to control the size of the gaps between totients. 
Namely, let $v(n)$ be the largest possible totient $m\le n$.  Note that by a result 
of  Baker,  Harman and  Pintz~\cite{BaHaPi} on gaps between primes we immediately conclude that
\begin{equation}
\label{eq:gap tot}
n \ge v(n) \ge n - n^{21/40},
\end{equation}
for all sufficiently large $n$. 
We use this as an opportunity to attract attention to the question of bounding the 
gaps between totients
in a better way, which is of independent interest, see also~\cite{FoKo}
for some related results.

Since $1=\varphi(1)$ is a totient, each integer can be represented as a sum 
of some number $h \ge 1$ of totients and hence we can define
\begin{equation}
\label{eq:wn}
w(n) = \max\left \{ \sum_{j=1}^h \varphi(k_j)^2:~n =\sum_{j=1}^h \varphi(k_j)\right\},
\end{equation}
where the maximum is taken over all such representations of all possible lengths $h \ge 1$. 
In particular, by~\eqref{eq:gap tot} we have a trivial bound 
\[
n^2 \ge w(n) \ge \(n - n^{21/40}\)^2
\]
for all sufficiently large $n$.   

We have the following estimates

\begin{thm}
\label{thm:MultDep} With $\gamma_n$ as in~\eqref{eq:CharPoly} and $w(n)$ as in~\eqref{eq:wn}, 
we have 
\[
 H^{ sn^2  - n -\min\{n,\gamma_n\}+o(1)}   \ge \# \cN_{n,s}(H) \gg 
 H^{(s-1)n^2 + w(n)/2 -n/2}.
 \]  
 \end{thm}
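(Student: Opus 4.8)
The plan is to prove the two bounds by rather different arguments. For the lower bound I would exhibit an explicit family of multiplicatively dependent tuples built from finite‑order matrices, and for the upper bound I would reduce to Theorem~\ref{thm:MaxRank} by splitting a dependent tuple into a minimal dependent sub‑tuple plus free coordinates, handling the short sub‑tuples by hand (this is where $\gamma_n$ enters).

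\emph{Lower bound.} Any $s$-tuple $(A_1,\dots,A_{s-1},A_s)$ with $A_s$ of finite order satisfies $A_s^{\ord(A_s)}=I_n$, hence lies in $\cN_{n,s}(H)$; letting $A_1,\dots,A_{s-1}$ range over all of $\cM_n(\Z;H)$ contributes a factor $(2\fl{H}+1)^{(s-1)n^2}\gg H^{(s-1)n^2}$, so everything reduces to producing $\gg H^{w(n)/2-n/2}$ finite‑order matrices in $\cM_n(\Z;H)$. For this, fix a representation $n=\sum_{j=1}^{h}\varphi(k_j)$ into totients attaining $w(n)=\sum_{j=1}^{h}\varphi(k_j)^2$ and consider the block‑diagonal matrices $\mathrm{diag}(B_1,\dots,B_h)$, where each $B_j$ is a $\varphi(k_j)\times\varphi(k_j)$ integer matrix with entries of size at most $H$ and characteristic polynomial the cyclotomic polynomial $\Phi_{k_j}$. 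Since $\Phi_{k_j}$ is separable and has degree equal to the size of $B_j$, such a $B_j$ is diagonalisable over $\C$ with eigenvalues the primitive $k_j$-th roots of unity, so $B_j^{k_j}$ is the identity; hence each block‑diagonal matrix has finite order, lies in $\cM_n(\Z;H)$, and distinct choices of blocks give distinct matrices. By the Eskin--Mozes--Shah formula~\eqref{eq:Asymp} applied to each fixed irreducible $\Phi_{k_j}$ (using $\|\cdot\|_\infty\le\|\cdot\|_2$ to pass between norms) the number of admissible $B_j$ is $\gg H^{\varphi(k_j)(\varphi(k_j)-1)/2}$; multiplying over $j$ and using $\sum_j\varphi(k_j)=n$, $\sum_j\varphi(k_j)^2=w(n)$ gives $\gg H^{(w(n)-n)/2}$ finite‑order matrices, whence $\#\cN_{n,s}(H)\gg H^{(s-1)n^2+w(n)/2-n/2}$.

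\emph{Upper bound.} Every multiplicatively dependent $s$-tuple contains a minimal (hence maximal‑rank) dependent sub‑tuple $(A_{i_1},\dots,A_{i_t})\in\cN_{n,t}^*(H)$, the remaining $s-t$ coordinates being arbitrary, so
\[
\#\cN_{n,s}(H)\le\sum_{t=1}^{s}\binom{s}{t}\,\#\cN_{n,t}^*(H)\,(2\fl{H}+1)^{(s-t)n^2}.
\]
For $t\ge3$ one has $\rf{t/2}\ge2$, so Theorem~\ref{thm:MaxRank} bounds that term by $H^{sn^2-2n+o(1)}$, which is within the claimed estimate since $\min\{n,\gamma_n\}\le n$. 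For $t=1$ the set $\cN_{n,1}^*(H)$ consists of the finite‑order matrices of $\cM_n(\Z;H)$, whose characteristic polynomials are among the $O(1)$ products of cyclotomic polynomials of total degree $n$; by the definition~\eqref{eq:CharPoly} of $\gamma_n$ there are then at most $H^{n^2-n-\gamma_n+o(1)}$ of them, giving a $t=1$ contribution $\le H^{sn^2-n-\gamma_n+o(1)}$. Hence it remains to prove $\#\cN_{n,2}^*(H)\le H^{2n^2-n-\min\{n,\gamma_n\}+o(1)}$.

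\emph{The pair case, and the main obstacle.} This is the crux and the step I expect to be hardest. For $(A,B)\in\cN_{n,2}^*(H)$ with a relation $A^aB^b=I_n$ (so $ab\ne0$ and neither matrix has finite order), the matrix $C:=A^a=B^{-b}$ is central in $\langle A,B\rangle$; once $A$ and $a$ are fixed it is determined, and the characteristic polynomial of $B$ must have its roots among the $|b|$-th roots of the eigenvalues of $C$, so there are at most $|b|^{n}$ possibilities for it, each realised by $\le H^{n^2-n-\gamma_n+o(1)}$ matrices by the definition of $\gamma_n$. Summing over the $\le H^{n^2+o(1)}$ choices of $A$ this yields $\#\cN_{n,2}^*(H)\le H^{2n^2-n-\gamma_n+o(1)}\sum_{(a,b)}|b|^{n}$, which suffices provided the admissible exponents contribute only $H^{o(1)}$ — and the difficulty is that they need not, as the examples $(A,A^{m})$ with $A$ unipotent and $|m|$ of size $H$ show. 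I would get around this by a dichotomy. If some eigenvalue of $A$ or $B$ lies off the unit circle, then (taking determinants) $|\det A|,|\det B|\ge2$, so via $M(f)\ge|\det|$ and a lower bound for the largest eigenvalue modulus of an integer matrix of bounded size coming from Mahler‑measure gaps for algebraic numbers of degree $\le n$, one gets an exponential lower bound $\|A^{a}\|\gg\rho^{|a|}$ with $\rho>1$ absolute, which against $\|A^{a}\|=\|B^{-b}\|\le H^{O(|b|)}$ (and symmetrically) controls the exponents, leaving only residual large‑exponent configurations such as $A^{N}=B^{N}$ to be treated by counting $N$-th roots. In the complementary quasi‑unipotent case both $A$ and $B$ have all eigenvalues equal to roots of unity, hence only $O(1)$ characteristic polynomials each, so that family has size $\le H^{2(n^2-n-\gamma_n)+o(1)}$, again within the target. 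Making this dichotomy quantitative — in particular handling the derogatory subcases where the centraliser of $C$ has dimension exceeding $n$ — is the technical heart; the remaining bookkeeping only uses estimates already in place.
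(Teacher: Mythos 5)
Your overall architecture matches the paper's: the lower bound via block-diagonal finite-order matrices built from cyclotomic characteristic polynomials $\Phi_{k_j}$ and the Eskin--Mozes--Shah formula~\eqref{eq:Asymp} is essentially identical to the paper's, and for the upper bound passing to a minimal dependent sub-tuple and invoking Theorem~\ref{thm:MaxRank} for $t\ge 3$ plus the finiteness of cyclotomic characteristic polynomials for $t=1$ is the paper's argument (phrased there via the rank $r$, here via the size $t$ of a minimal dependent sub-tuple — equivalent). You correctly isolate the pair case $t=2$ as the crux, but you do not close it, and the dichotomy you sketch starts from a false inference: from ``some eigenvalue of $A$ or $B$ lies off the unit circle'' you conclude $|\det A|,|\det B|\ge 2$, which fails — any hyperbolic element of $\SL_2(\Z)$, e.g.\ $\left(\begin{smallmatrix} 2 & 1\\ 1 & 1\end{smallmatrix}\right)$, has determinant $1$ and eigenvalues $(3\pm\sqrt5)/2$ off the unit circle. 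Even after fixing the case split, the growth comparison only yields $|a|\ll|b|\log H$ and $|b|\ll|a|\log H$, which is consistent with both exponents being of size $H$; the ``residual large-exponent'' and ``derogatory'' sub-cases you flag are precisely what must be controlled and are left open.

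The paper's Lemma~\ref{lem: md pairs} avoids all of this by applying Loxton--van der Poorten~\cite[Theorem~3]{LvdP} at the level of \emph{eigenvalues} rather than matrices. From $A^k=B^m$ with $(k,m)\ne(0,0)$ one gets, for each eigenvalue $\lambda$ of $A$, an eigenvalue $\mu$ of $B$ with $\lambda^k=\mu^m$; these are algebraic numbers of degree $\le n$ and logarithmic height $O(\log H)$, so Loxton--van der Poorten replaces $(k,m)$ by exponents of size $O(\log H)$ (now allowed to depend on $\lambda,\mu$). Hence a given $B$ admits only $H^{o(1)}$ possible characteristic polynomials for $A$, and~\eqref{eq:CharPoly} gives $\#\cN_{n,2}(H)\le H^{2n^2-n-\gamma_n+o(1)}$. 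The key point is that the eigenvalue relation may have a vastly smaller solution than the matrix relation — in your unipotent example $\lambda=\mu=1$ and the relation holds with exponents $1$, even though the matrix exponents must be of size $H$ (this is exactly the example~\eqref{eq:Bad Exp}). That shift of viewpoint is the missing idea.
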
 

As remarked above, we prove that  $\gamma_n>0$ for all $n\ge 2$ (and for $n=2$, our lower bound on $\gamma_n$ corresponds to Conjecture~\ref{conj:CharPoly}), see Theorems~\ref{thm:gamma23} and~\ref{thm:gamman}. 
In particular, for $n=2$ we have $\gamma_2 = 1$ 
and $w(2) = \varphi(3)^2 = 4$, hence  Theorem~\ref{thm:MultDep} gives matching lower and upper bounds
\[
 \# \cN_{2,s}(H) = H^{4s-3+o(1)}.
\]

We note that the upper bound in Theorem~\ref{thm:MultDep} is trivial for matrices $A_1,\ldots,A_s\in\SL_n(\Z)$. Indeed, if we denote 
$\cS_{n,s}(H)$ to be the set of  multiplicatively dependent 
$s$-tuples $(A_1, \ldots, A_s) \in \cM_n\(\Z; H\)^s$ with $\det A_i=1$, $i=1,\ldots,s$, then the trivial bound for $\#\cS_{n,s}(H)$ is $H^{sn^2-sn+o(1)}$ by Lemma~\ref{lem: Det} below. Here we can give a nontrivial bound only for $s=2$.

\begin{thm}
\label{thm:sln}
With $\gamma_n$ as in~\eqref{eq:CharPoly} and $w(n)$ as in~\eqref{eq:wn}, we have
\[
H^{2n^2-2n-\gamma_n+o(1)} \ge \#\cS_{n,2}(H)\gg H^{n^2-3/2n+\omega(n)/2} .
\]
\end{thm}

We note that if Conjecture~\ref{conj:CharPoly} holds, that is, if $\gamma_n=n(n-1)/2$, then Theorem~\ref{thm:sln} gives matching bounds in some situations. For example, when $n=\varphi(k)$, is a value of the Euler function,  
we have $\omega(n)=n^2$, and thus in this case Theorem~\ref{thm:sln} gives 
\[
\#\cS_{n,2}(H)= H^{3n(n-1)/2+o(1)}.
\]

We now note that the lower bound on $ \# \cN_{n,s}(H)$ in Theorem~\ref{thm:MultDep}
can also serve as a lower bound on  $ \# \cK_{n,s}(H)$, so we  now only present an upper bound. 

\begin{thm}
\label{thm:Kernel}
We have 
\[
 \# \cK_{n,s}(H)\le H^{sn^2-n + o(1)}.
\]
\end{thm}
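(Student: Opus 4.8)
The plan is to bound $\#\cK_{n,s}(H)$ by classifying the tuples according to a shortest relation of the form~\eqref{eq:Nonfree} in which the exponent sum of at least one generator is nonzero. Suppose $(A_1,\ldots,A_s)\in\cK_{n,s}(H)$, so that there is a word
$A_{i_1}^{\pm 1}\cdots A_{i_L}^{\pm 1}=I_n$
with, say, the exponents of $A_1$ summing to some $m\ne 0$. The key idea is to pass to the abelianisation: reading the relation in the abelianised group, every occurrence of each $A_j$ can be collected, so the relation forces a multiplicative identity among the images of $A_1,\ldots,A_s$ in which the exponent of (the image of) $A_1$ is the nonzero integer $m$. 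Thus, after isolating $A_1$, we may solve for one ``coordinate'' of $A_1$ in terms of a word in the remaining matrices and their inverses. The upshot is that $A_1$ is constrained to lie in a set of size essentially $H^{n^2-1+o(1)}$ once $A_2,\ldots,A_s$ are fixed, because one scalar invariant of $A_1$ (coming from the nonzero exponent sum $m$) is pinned down.

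The key technical step I would carry out is the following counting lemma, in the spirit of Theorem~\ref{thm:MaxRank}: fix $A_2,\ldots,A_s\in\cM_n(\Z;H)$ and a nonzero integer vector of exponent sums; then the number of $A_1\in\cM_n(\Z;H)$ completing a relation~\eqref{eq:Nonfree} with prescribed word structure is at most $H^{n^2-n+o(1)}$. Here one uses that a relation of the form $W_1 A_1 W_2 A_1 \cdots = I_n$ (with $W_j$ fixed words in the other matrices and their inverses, whose entries are rationals of controlled height), after abelianising or after looking at the determinant, gives $\det(A_1)^{m}$ equal to a fixed nonzero rational, hence $\det(A_1)$ is confined to $|\det A_1|=H^{n-1+o(1)}$ many values (using the divisor bound~\eqref{eq:tau}); then Lemma~\ref{lem: Det} (the bound of~\cite[Theorem~4]{Shp}) bounds the number of $A_1\in\cM_n(\Z;H)$ with each given determinant by $H^{n^2-n+o(1)}$. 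Summing over the $\le s\cdot\#\cM_n(\Z;H)^{s-1}$ choices of which generator has nonzero exponent sum and of $A_2,\ldots,A_s$, and over the $H^{o(1)}$ admissible determinant values, yields
$\#\cK_{n,s}(H)\le H^{(s-1)n^2}\cdot H^{n^2-n+o(1)}=H^{sn^2-n+o(1)}$,
as claimed.

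There is one subtlety that needs care: the words $W_j$ appearing after isolating $A_1$ involve inverses $A_j^{-1}$, whose entries are rational with denominator $\det(A_j)$, so a priori these are not matrices in $\cM_n(\Z;H)$ and the relation is over $\GL_n(\Q)$, not over $\Z$. However, for the determinant argument this is harmless: taking determinants of~\eqref{eq:Nonfree} is legitimate over $\Q$, and gives exactly $\prod_j \det(A_j)^{e_j}=1$ where $e_j$ is the exponent sum of $A_j$; since some $e_j\ne 0$ and $\det(A_j)\in\Z$ with $|\det(A_j)|\le (n!)H^n$, this is a genuine multiplicative constraint pinning $\det(A_j)$ to $H^{o(1)}$ possibilities once the others are fixed. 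I would therefore phrase the argument entirely through determinants rather than through the full word, which both avoids the denominators issue and makes the bound uniform in the (infinitely many possible) word lengths $L$: only the finitely many exponent-sum vectors $(e_1,\ldots,e_s)\in\Z^s$ with $\max_j|e_j|$ bounded in terms of... actually $L$ is unbounded, so here one uses that $\prod\det(A_j)^{e_j}=1$ with the $\det(A_j)$ lying in a fixed finite set forces only finitely many relevant $(e_1,\ldots,e_s)$ up to the kernel of this map, and in particular some $\det(A_j)=\pm1$ or is otherwise heavily constrained; this is exactly where the non-triviality of the abelianisation kernel is used.

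The main obstacle I expect is precisely this last point: making rigorous that a nonzero exponent-sum vector, combined with a determinantal identity over $\Z$, always produces an honest savings of a full factor $H^n$ uniformly over all word lengths $L$ and all exponent-sum vectors. The cleanest route is to argue that the hypothesis defining $\cK_{n,s}(H)$ means there is some $j$ and some integer $m\ne0$ with $\det(A_j)^{m}=\prod_{i\ne j}\det(A_i)^{-e_i}$; fixing $A_i$ for $i\ne j$ fixes the right-hand side as a nonzero rational $r$, and $\det(A_j)^m=r$ has at most $|m|\le$ (implicitly bounded via unique factorisation of $|r|$, using~\eqref{eq:tau}) $=H^{o(1)}$ integer solutions $\det(A_j)$; then Lemma~\ref{lem: Det} finishes. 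One must also handle the degenerate possibility $r=\pm1$ (so $\det(A_j)=\pm1$), which is even more restrictive, not less. Assembling these cases gives the stated bound.
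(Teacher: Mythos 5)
Your high-level strategy matches the paper's: take determinants of the word relation to get $\prod_{i=1}^s(\det A_i)^{k_i}=1$ with some $k_j\ne 0$, then argue that once $A_i$ for $i\ne j$ are fixed, $\det A_j$ lies in a set of size $H^{o(1)}$, and finally invoke Lemma~\ref{lem: Det} to bound the number of matrices $A_j$ with a given determinant. That is exactly the skeleton of the paper's argument.

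However, the key intermediate step in your write-up has a genuine gap. You claim that ``fixing $A_i$ for $i\ne j$ fixes the right-hand side as a nonzero rational $r$'', but $r=\prod_{i\ne j}\det(A_i)^{-e_i}$ depends on the exponent vector $(e_1,\ldots,e_s)$, which is \emph{not} fixed once the other matrices are: different tuples in $\cK_{n,s}(H)$ can satisfy relations of arbitrarily large word length $L$ and arbitrarily large exponents. You then flag this issue yourself (``actually $L$ is unbounded, so\ldots'') but the resolution you sketch---that $\det A_j=\pm 1$ or is ``otherwise heavily constrained''---is both false in general (e.g.\ $\det A_1=2$, $\det A_2=4$, $(\det A_1)^2(\det A_2)^{-1}=1$) and too vague to yield the needed quantitative bound. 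Solving $\det(A_j)^m=r$ via the divisor bound also does not control the number of admissible pairs $(m,r)$, so~\eqref{eq:tau} alone is not enough.

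The paper's clean fix is Lemma~\ref{lem: Sunits}. From $\prod_i(\det A_i)^{k_i}=1$ with $k_1\ne 0$ one reads off (by unique factorisation of the nonzero integers $\det A_i$) that \emph{every prime divisor of $\det A_1$ divides} $Q=\prod_{i\ge 2}\det A_i$, regardless of what the exponents are. Since $|\det A_1|\le n!H^n$ and $|Q|\le (n!H^n)^{s-1}$, Lemma~\ref{lem: Sunits} (a de Bruijn smooth-number estimate) gives that $\det A_1$ ranges over at most $F(Q, n!H^n)=(QH^n)^{o(1)}=H^{o(1)}$ values. This replaces your ``$r$ is fixed'' claim uniformly over all word lengths and exponent vectors, and then Lemma~\ref{lem: Det} finishes the proof exactly as you intended. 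So your outline is right in spirit, but you are missing the $S$-unit counting input that makes the middle step rigorous.
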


In particular, we see from  Theorem~\ref{thm:Kernel} that almost all choices 
of the matrices $A_1, \ldots, A_s \in  \cM_n\(\Z; H\)$ generate a 
group for which the kernel of the 
abelianisation map~\eqref{eq:AbMap} contains only words $A_{i_1}^{\pm 1} \cdots A_{i_L}^{\pm 1}$, where 
for every $i$, the exponents of $A_i$ sum up to zero.

\subsection{Counting  matrices which boundedly generate subgroups}  
Motivated by the notion of  bounded generation, see~\cite{CDRRZ, CRRZ} for further references,  
we also ask for an upper bound on the cardinality of the set $\cG_{n,s}(H)$ 
of $s$-tuples $(A_1, \ldots, A_s) \in \cM_{n}(\Z;H)$ of non-singular matrices such that   
\[
\{A_1^{k_1}\ldots A_s^{k_s}:~k_1, \ldots, k_s \in \Z\} = \langle A_1\rangle \ldots  \langle A_s\rangle \le\GL_n(\Q), 
\]
where, as usual, $\cH \le \cG$ means that $\cH$ is a subgroup of a group $\cG$ and  $\langle A\rangle$ 
denotes the cyclic group generated by $A$.

We remark that just the fact that $I_n$ belongs to every subgroup of $\GL_n(\Q)$ shows that we cannot use our bounds on 
$\# \cN_{n,s}(H)$ from Section~\ref{sec:md matr} since now the choice $k_1=\ldots = k_s =0$ is not excluded. 
Nevertheless, we complement the underlying argument with some other ideas and show that  $\cG_{n,s}(H)$ 
is a rather sparse set.  This result can be viewed as dual to the recent work 
of Corvaja,  Demeio,  Rapinchuk,  Ren and Zannier~\cite{CDRRZ} on sparsity 
of elements of boundedly generated subgroups of $\GL_n(\Q)$. Some sparsity results
can also be derived from the work of   Chambert-Loir and  Tschinkel~\cite{C-LT}. 

\begin{thm}
\label{thm:BD gen}
For $n \ge 2$, we have 
\[
\# \cG_{n,s}(H) \le  H^{sn^2 -  (s-1)n/3+o(1)}. 
\]
\end{thm}

\section{Analytic number theory background}

\subsection{Integers with  prime divisors from a prescribed set}
  
For an integer $Q\ne 0$ and a real $U\ge 1$, let $F(Q,U)$  be the number 
of positive integers $u \le U$, whose prime divisors are amongst those of $Q$.

As in~\cite[Section~3.1]{PSSS}, in particular, see the derivation of~\cite[Equation~(3.10)]{PSSS},  using a result of  de Bruijn~\cite[Theorem~1]{dBr}, one immediately derives 
the following result (for which we supply a short sketch of the proof).

\begin{lemma}
\label{lem: Sunits}
 For any integer $Q\ge 1$ and   real $U\ge 1$, we have 
\[
F(Q,U) = \(Q U\)^{o(1)} , \qquad \text{as} \ U \to \infty.
\] 
\end{lemma}

\begin{proof} Let $p_1 <  \ldots < p_\nu \le U$ be all prime divisors of $Q$ up to $U$ and let 
$q_1 <  \ldots < q_\nu$ be the first $\nu$ primes. Replacing each $p_i$ with $q_i$
in any integer which contributes to $F(Q,U)$ may only reduce the size of these 
integers, and distinct integers have distinct images under this transformation. 
Hence $F(Q,U) \le \psi(U, q_\nu)$, where $\psi(x,y)$ is the number of positive 
integers up to $x$ with prime divisors up to $y$. Since clearly $\nu!\le Q$, 
by the Stirling formula (or just by an elementary bound $\nu^\nu/\nu!\le
  e^\nu$)   and assuming that $Q$ is sufficiently large,  we have 
\[
\nu \le (1+ o(1))\frac{\log Q}{\log \log Q},
\]
and by the prime number theorem $q_\nu \le  (1+ o(1))\log Q$. Now,
using the bound on $\psi(x,y)$  due to   de Bruijn~\cite[Theorem~1]{dBr}, 
after simple calculations we obtain the result. 
\end{proof}

\subsection{Rational  points on varieties over finite fields}
\label{sec:rat point}

We need a version of the celebrated result of Lang and
Weil~\cite{LaWe}, but adapted to affine varieties, see, for
example,~\cite[Theorem~7.5]{CaMa}.  Let $q$ be a power of
  a prime number.

\begin{lemma}\label{lem:LaWe} Let $\cV$ be an  algebraic subset of
  affine $m$-space defined
  over a finite field $\F_q$ of $q$ elements. Let $r=\dim \cV>0$ and let $d$ be the sum of the
  degrees of all $\F_q$-irreducible components of $\cV$. Finally, let
  $\sigma\ge 0$ denote the number of $r$-dimensional $\F_q$-irreducible components of $\cV$  that
  are  absolutely irreducible.
  Then 
  \[
   \# \cV(\F_q) =\sigma q^r     +O(q^{r-1/2}),
  \]  where the implied constant depends only on $m$ and $d$.
\end{lemma}
\begin{proof}
  In the notation of~\cite[Theorem~7.5]{CaMa} we have $\delta= d$ and
  $\Delta\le d$. 
  The asymptotic estimate holds when $q>2(r+1)d^2$. If $q\le
  2(r+1)d^2$ then $q\le 2(m+1)d^2$. The estimate also holds as $\#\cV(\F_q)\le q^m$ and $\sigma
  q^r \le dq^m$. 
\end{proof}

\section{Preliminaries on matrices}
\subsection{Matrices with a fixed determinant}
 
We need a bound on the number of matrices $A \in \cM_n\(\Z;H\)$ with prescribed value 
of the determinant $\det A =d$. We recall that  Duke,  Rudnick and  Sarnak~\cite{DRS},
if $d \ne 0$, and  Katznelson~\cite{Kat2} when $d=0$, have obtained asymptotic formulas 
(with the main terms of orders $H^{n^2 - n}$ and $H^{n^2 - n}\log H$, respectively) 
for the number of  such matrices when $d$ is fixed. However this is too restrictive for our purpose 
and so we use a uniform with respect to $d$ upper bound which is a special case 
of~\cite[Theorem~4]{Shp}.

\begin{lemma}
\label{lem: Det}
For any integer $d$, there are at most  $O\(H^{n^2 - n}\log H\)$ matrices  
$A \in \cM_n\(\Z;H\)$ with $\det A =d$.
\end{lemma}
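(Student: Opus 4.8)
The statement to prove is Lemma~\ref{lem: Det}: for any integer $d$, the number of matrices $A\in\cM_n\(\Z;H\)$ with $\det A=d$ is $O\(H^{n^2-n}\log H\)$, with the implied constant depending only on $n$. As the text explicitly says, this is a special case of~\cite[Theorem~4]{Shp}, so the ``proof'' is really a reduction: one cites the general counting result from~\cite{Shp} for the number of integer solutions of polynomial equations in boxes and specialises it to the single polynomial equation $\det(x_{ij})=d$ in the $n^2$ variables $x_{ij}$ ranging over $[-H,H]$. The plan is therefore (i) to recall the precise statement of~\cite[Theorem~4]{Shp}, (ii) to check that the determinant polynomial meets its hypotheses, and (iii) to read off the exponent $n^2-n$ and the logarithmic factor.

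\textbf{Key steps.} First I would record what~\cite[Theorem~4]{Shp} gives: roughly, for an absolutely irreducible (or otherwise suitably non-degenerate) polynomial $F\in\Z[X_1,\dots,X_N]$ of degree $\delta$, the number of zeros of $F$ in a box of side $2H$ is $O\(H^{N-1}\log H\)$ uniformly in the right-hand side, possibly under a normalisation such as $F$ being primitive or the leading form being nonzero on the box. Here $N=n^2$, $F=\det(X_{ij})-d$, and $\delta=n$. The determinant form $\det(X_{ij})$ is a well-known absolutely irreducible polynomial (for $n\ge 2$; the case $n=1$ is trivial since then there are $O(1)$ solutions), so the hypothesis is satisfied, and the ambient box $[-H,H]^{n^2}$ has side $2H$. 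Plugging $N=n^2$ into the bound $H^{N-1}\log H$ would give $H^{n^2-1}\log H$, which is weaker than claimed; the sharper exponent $n^2-n$ must come from the more refined form of~\cite[Theorem~4]{Shp} that exploits the multilinear structure of $\det$ — namely, fixing the first $n-1$ rows freely (giving $H^{n(n-1)}$ choices, up to lower order) reduces the equation in the last row to a single inhomogeneous linear equation with a nonzero coefficient vector (the vector of signed $(n-1)\times(n-1)$ minors of the fixed rows), whose number of solutions in a box of side $2H$ is $O\(H^{n-1}\)$ when the coefficient vector is nonzero and $O(H^n)$ otherwise; and the proportion of choices of the first $n-1$ rows for which all those minors vanish is itself of smaller order, as counted in~\cite{Shp}. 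Summing $H^{n(n-1)}\cdot H^{n-1}$ and absorbing the exceptional set gives $H^{n^2-n}$, with a single $\log H$ arising from the dyadic / divisor bookkeeping over the possible values of the linear coefficient. I would present this as a direct appeal to~\cite[Theorem~4]{Shp}, noting only that the quoted theorem is stated in exactly the generality (uniformity in $d$, correct power of $H$, logarithmic loss) that we need.

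\textbf{Main obstacle.} There is essentially no mathematical obstacle here, since the result is quoted verbatim from the literature; the only care needed is bookkeeping — verifying that the normalisation and non-degeneracy hypotheses of~\cite[Theorem~4]{Shp} are literally met by $F=\det(X_{ij})-d$ for all integers $d$ (including $d=0$, where the variety is singular but still of the correct dimension $n^2-1$, and where the asymptotic count of~\cite{Kat2} confirms the $H^{n^2-n}\log H$ order), and confirming that the logarithmic factor in~\cite{Shp} is at most a single power of $\log H$ with an $n$-dependent constant rather than something larger. Once that is checked, the proof is a one-line citation. If for some reason the precise form of~\cite[Theorem~4]{Shp} did not directly yield the $n^2-n$ exponent, the fallback would be the elementary row-by-row argument sketched above: freely choose rows $1,\dots,n-1$, split according to whether the resulting vector of $(n-1)\times(n-1)$ minors vanishes, and in the nonvanishing case solve the single linear equation for the last row using the standard bound on lattice points on a hyperplane slice of a box, while in the vanishing case control the number of such $(n-1)$-tuples of rows by induction on $n$ (or again by~\cite[Theorem~4]{Shp} applied to the determinantal variety of $(n-1)\times n$ matrices of rank $<n-1$).
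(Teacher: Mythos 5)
Your primary route --- simply invoking \cite[Theorem~4]{Shp} --- is exactly what the paper does: Lemma~\ref{lem: Det} is stated there with no proof beyond the remark that it is a special case of that theorem, so at the level of the citation your proposal coincides with the paper. One caution on the bookkeeping you propose: \cite[Theorem~4]{Shp} is not a generic $O(H^{N-1}\log H)$ bound for zeros of an absolutely irreducible polynomial in a box (as you yourself note, such a bound would only give $H^{n^2-1}\log H$ and is useless here); it is a statement tailored to the determinant equation, already uniform in $d$ and already carrying the exponent $n^2-n$ and a single $\log H$. So the specialisation really is a one-line matter, and no ``refined multilinear form'' of a general theorem needs to be supplied.

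The genuine problem is with the self-contained argument you offer, both as an explanation of where the exponent comes from and as a fallback. Fixing the first $n-1$ rows freely costs $H^{n(n-1)}$, and when the vector of signed $(n-1)\times(n-1)$ minors is nonzero the last row is confined to an affine hyperplane, costing $H^{n-1}$; the product is $H^{n(n-1)+(n-1)}=H^{n^2-1}$, not $H^{n^2-n}$ as you assert. Already for $n=2$ this scheme gives $H^3$ against the true order $H^2\log H$, so no amount of care with the exceptional set (which, incidentally, is harmless for $d\ne 0$, since vanishing of all those minors forces $\det A=0$) can rescue it. The passage from $H^{n^2-1}$ to $H^{n^2-n}\log H$ requires exploiting the arithmetic of the equation --- e.g.\ divisor-function bounds applied to products of entries, or the lattice-point counting in the style of \cite{Kat2,Kat-Sing}, where rows are counted inside sublattices orthogonal to short vectors and one sums over those sublattices --- which is what the proof in \cite{Shp} actually does. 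So the citation-only version of your proof is fine and matches the paper, but if you intend to present an independent argument, the row-by-row count you sketch must be replaced, not merely tidied.
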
 

\subsection{Pairs  of multiplicatively  dependent matrices} 
 
We deal with the case  $s=2$ separately. 
 
 \begin{lemma}
\label{lem: md pairs}
With $\gamma_n$ as in~\eqref{eq:CharPoly}, we have 
 \[\# \cN_{n,2}(H) \le H^{2n^2 - n - \gamma_n + o(1)}.
 \]
 \end{lemma}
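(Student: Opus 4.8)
The plan is to bound the number of multiplicatively dependent pairs $(A_1,A_2)\in\cM_n(\Z;H)^2$, that is pairs for which $A_1^{k_1}A_2^{k_2}=I_n$ for some $(k_1,k_2)\ne(0,0)$. First I would dispose of the degenerate cases. If $k_1=0$ then $A_2^{k_2}=I_n$ with $k_2\ne 0$, so $A_2$ has finite multiplicative order; there are only $O(H^{n^2-n+o(1)})$ matrices in $\cM_n(\Z;H)$ of finite order (their eigenvalues are roots of unity of bounded degree, hence their characteristic polynomials lie in a finite set depending only on $n$, and then one applies the bound~\eqref{eq:CharPoly} together with a crude count for $A_1$), which is negligible; the case $k_2=0$ is symmetric. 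So I may assume $k_1k_2\ne 0$, and by taking inverses if necessary (replacing $A_i$ by $A_i^{-1}$, which does not leave $\cM_n(\Z;H)$ but only affects the sign of the exponent, so really one argues with $\pm$ exponents) I may reduce to a relation of the form $A_1^{a}=A_2^{b}$ with $a,b$ positive.

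The key structural observation is that if $A_1^{a}=A_2^{b}=:B$, then $A_1$ and $A_2$ both commute with $B$, and more importantly $A_1$ and $B$ have a tightly constrained relation of their characteristic polynomials: the eigenvalues of $B$ are the $a$-th powers of the eigenvalues of $A_1$, and likewise the $b$-th powers of those of $A_2$. The cleanest way to exploit this is to fix $A_1$ first. Given $A_1\in\cM_n(\Z;H)$ (there are at most $(2H+1)^{n^2}=H^{n^2+o(1)}$ choices), the matrix $B=A_1^{a}$ is determined by $a$, and its characteristic polynomial $f_B\in\Z[X]$ is then determined by $a$ and $f_{A_1}$. Now $A_2$ is a matrix in $\cM_n(\Z;H)$ satisfying $A_2^b=B$; in particular the characteristic polynomial of $A_2$ is one of the finitely many (depending only on $n$ and $b$) monic degree-$n$ polynomials whose roots are $b$-th roots of the roots of $f_B$, and those lie in a set of size $O_n(1)$ once $f_B$ is fixed and $b$ is fixed. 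Hence for each fixed $A_1$ and each fixed pair $(a,b)$, the number of admissible $A_2$ is at most $O_n(1)$ times $\max_f R_n(H;f)\le H^{n^2-n-\gamma_n+o(1)}$ by the definition~\eqref{eq:CharPoly} of $\gamma_n$.

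It remains to control the ranges of the exponents $a,b$: a priori they are unbounded, and one cannot afford to sum over all of $\Z^2$. Here I would use that $A_1^a=B$ with $A_1,B$ integer matrices of size at most $H$ forces $a$ to be bounded in terms of $H$: if $A_1$ is not of finite order then some eigenvalue $\lambda$ of $A_1$ has $|\lambda|\ne 1$, and since $\lambda$ is an algebraic integer of degree $\le n$ with all conjugates of absolute value $O(H)$, both $|\lambda|$ and $|\lambda|^{-1}$ are bounded below by $1+c/H^{c'}$ (a Dobrowolski/Mahler-measure-type lower bound, or more elementarily the resultant/discriminant argument), while the eigenvalues of $B=A_1^a$ are $O(H)$ in absolute value; comparing $|\lambda|^{a}=O(H)$ gives $a\le H^{o(1)}$, and similarly $b\le H^{o(1)}$. (If $A_1$ \emph{is} of finite order we are back in the negligible case already handled.) Therefore the number of relevant pairs $(a,b)$ is $H^{o(1)}$, and multiplying the three contributions --- $H^{n^2+o(1)}$ choices of $A_1$, $H^{o(1)}$ choices of $(a,b)$, and $H^{n^2-n-\gamma_n+o(1)}$ choices of $A_2$ --- yields $\#\cN_{n,2}(H)\le H^{2n^2-n-\gamma_n+o(1)}$, as claimed.

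I expect the main obstacle to be the exponent-bounding step: making rigorous and uniform (over all $A_1\in\cM_n(\Z;H)$ that are not of finite order) the claim that $A_1^a=B$ with both matrices of height $\le H$ forces $a=H^{o(1)}$. This needs a quantitative lower bound on $\max(|\lambda|,|\lambda|^{-1})-1$ for an eigenvalue $\lambda$ of $A_1$ that is not a root of unity, which one can get either from Kronecker's theorem made effective (the house of a non-root-of-unity algebraic integer of degree $n$ with height $\le H$ exceeds $1+cH^{-c(n)}$) or, more self-containedly, by bounding the relevant resultant $\mathrm{Res}(f_{A_1}(X),f_{A_1}(X)\text{-shift})$ away from zero; a secondary subtlety is handling the $\pm$ signs of the exponents and the possibility that $A_1$ or $A_2$ has some but not all eigenvalues on the unit circle, which is why it is convenient to phrase everything in terms of characteristic polynomials and to invoke~\eqref{eq:CharPoly} rather than trying to pin down $A_2$ directly.
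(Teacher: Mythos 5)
Your overall strategy coincides with the paper's: fix one of the two matrices, argue that the exponents in the relation can be taken small (of size $H^{o(1)}$), deduce that the characteristic polynomial of the other matrix lies in a set of size $H^{o(1)}$, and then invoke the definition of $\gamma_n$ in~\eqref{eq:CharPoly} to bound the number of matrices with a given characteristic polynomial. This is exactly the structure of the paper's proof, which applies Loxton and van der Poorten~\cite[Theorem~3]{LvdP} to the pair of eigenvalues $(\lambda,\mu)$ to obtain a dependence relation with exponents $O(\log H)$.

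However, there is a genuine error in the step you yourself flag as the main obstacle. You assert that ``the eigenvalues of $B=A_1^a$ are $O(H)$ in absolute value.'' This is false: only $A_1$ and $A_2$ lie in $\cM_n(\Z;H)$; the common power $B=A_1^a=A_2^b$ need not, and its eigenvalues can be as large as $(CH)^{\min(a,b)}$. What the magnitudes alone give you, after replacing $\lambda$ by the conjugate of largest modulus (so that a Kronecker/Dobrowolski-type bound applies), is $a\log|\lambda|=b\log|\mu|$ together with $\log|\lambda|,\log|\mu|\in[c(n),C\log H]$, which bounds the \emph{ratio} $a/b$ but not $a$ and $b$ individually; two large coprime integers can have a bounded ratio. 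To bound $a$ and $b$ themselves one has to use more than absolute values. For instance, from $\lambda^a=\mu^b$ with $\gcd(a,b)=1$ one can choose $u,v\in\Z$ with $ua+vb=1$ and write $\lambda=(\mu^u\lambda^v)^b=:\nu^b$, where $\nu\in\Q(\lambda,\mu)^*$ has degree at most $n^2$ over $\Q$ and is not a root of unity (since $\lambda$ is not); then $h(\nu)=h(\lambda)/b$, and a Dobrowolski/Lehmer-type lower bound $h(\nu)\gg_n 1$ gives $b=O(\log H)$, and symmetrically $a=O(\log H)$. This is essentially what the theorem of Loxton and van der Poorten delivers, which is why the paper cites it rather than arguing ad hoc. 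So your plan is sound in outline and your instinct to reach for a height lower bound is the right one, but the specific eigenvalue-magnitude comparison you sketch does not close the gap and needs to be replaced by the argument above (or by a direct citation of~\cite{LvdP}).

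Two minor points: the phrase ``both $|\lambda|$ and $|\lambda|^{-1}$ are bounded below by $1+c/H^{c'}$'' should read $\max\bigl(|\lambda|,|\lambda|^{-1}\bigr)\ge 1+c/H^{c'}$; and note that with a correct treatment of the exponents the degenerate cases $k_1=0$ or $k_2=0$ are automatically absorbed (as in the paper, where one simply assumes $k\ne 0$ after reducing the exponents), so the separate preliminary reduction, while harmless, is not really needed.
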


\begin{proof}  Let $(A,B) \in \cN_{n,2}(H)$. Thus $A^k = B^m$ for some integers $k$ and $m$
with $(k,m) \ne (0,0)$. 

We now see that for every eigenvalue $\lambda$ of $A$  there is an eigenvalue
$\mu$ of $B$ such that 
\begin{equation}
\label{eq: md pair}
\lambda^k = \mu^m. 
\end{equation}
Clearly $\lambda$ and $\mu$ are of {\it logarithmic height\/}  $O\(\log H\)$, we refer to~\cite{BomGub}
for a background on heights.
It follows from a very special case  of a result  of Loxton and van der Poorten~\cite[Theorem~3]{LvdP}
(see also~\cite{LoMa,Matveev}) that if we have a relation~\eqref{eq: md pair} for some $k$ and $m$ 
with $(k,m) \ne (0,0)$, then we also have it for some $k,m = O(\log H)$
(which may depend on $\lambda$ and $\mu$),
where also, without loss of generality, we can now assume that $k\ne 0$.  
Hence each eigenvalue $\mu$ of $B$ gives rise to $O\((\log H)^2\)$ 
possible eigenvalues $\lambda$ of $A$. Hence, for each of $O\(H^{n^2}\)$ choices  
of $B \in  \cM_n(\Z;H)$  we have  $O((\log H)^{2n})$  possibilities for the characteristic 
polynomial of $A$ and thus by~\eqref{eq:CharPoly}, we have at most 
 $H^{n^2 - n - \gamma_n + o(1)}$  choices  for $A \in \cM_n(\Z;H)$. The result now follows. 
\end{proof}

As in the remark after Theorem~\ref{thm:sln}, we note that, if
Conjecture~\ref{conj:CharPoly} holds, that is, if $\gamma_n=n(n-1)/2$,
then the upper bound in Lemma~\ref{lem: md pairs} and the lower bound
in Theorem~\ref{thm:MultDep} (for $s=2$)  match in some
situations. For example, when $n=\varphi(k)$, is a value of the Euler function,  
we have $\omega(n)=n^2$, and thus in this case we have
\[
\#\cN_{n,2}(H)= H^{3n^2/2-n/2+o(1)}.
\]

\subsection{Matrices in the centraliser} 
Given $A$, we now estimate the cardinality of the set 
\[
\cC_n(A, H) = \{B \in \cM_n(\Z;H):~AB = BA\}
\]
of matrices $B \in \cM_n(\Z;H)$ which  commute with $A$. In other words, 
$\cC_n(A, H)$ is the 
set of matrices  $B \in \cM_n(\Z;H)$ which also belong to the 
{\it centraliser\/} of $A$. 

\begin{lemma}
\label{lem: Comm Matr}
Assume that $A$ has either a row or a column with two non-zero elements. 
Then 
\[
\#  \cC_n(A, H)  \ll H^{n^2-n}.
\]
\end{lemma}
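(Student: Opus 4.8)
The plan is to view the centraliser condition $AB = BA$ as a homogeneous system of linear equations in the $n^2$ entries of $B$, and to show that under the hypothesis this system has rank at least $n$, so that the integer solutions $B$ with entries in $[-H,H]$ number at most $O(H^{n^2-n})$.

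First I would set up the linear map $L_A : \cM_n(\Q) \to \cM_n(\Q)$ given by $L_A(B) = AB - BA$, whose kernel is the centraliser $C(A) = \{B : AB = BA\}$ as a $\Q$-vector space. The number of $B \in \cM_n(\Z;H)$ in the centraliser is at most the number of integer points of height $\le H$ in the subspace $\ker L_A \subseteq \Q^{n^2}$, which is $O\!\left(H^{\dim \ker L_A}\right)$. So it suffices to show $\dim \ker L_A \le n^2 - n$, equivalently $\operatorname{rank} L_A \ge n$. It is a classical fact that $\dim C(A) \ge n$ always, with equality exactly when $A$ is non-derogatory (its minimal polynomial equals its characteristic polynomial); but here we need the reverse inequality $\operatorname{rank} L_A \ge n$, i.e.\ that the centraliser is not too big, which can fail — for instance $C(I_n)$ is all of $\cM_n$. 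The hypothesis that $A$ has a row or column with two non-zero entries is precisely what rules out the degenerate scalar-like cases.

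The key step is to exhibit $n$ linearly independent vectors in the image of $L_A$, or dually $n$ independent linear functionals on $\cM_n$ vanishing on $C(A)$ being insufficient — better to argue directly. Here is the cleanest route: without loss of generality (transposing if necessary, since $A \mapsto A^{\mathsf T}$ turns a "bad row" into a "bad column" and $\#C_n(A,H) = \#C_n(A^{\mathsf T},H)$) assume row $i$ of $A$ has nonzero entries $a_{ip}$ and $a_{iq}$ with $p \ne q$. Now feed the elementary matrices $E_{jk}$ (with a $1$ in position $(j,k)$) into $L_A$ and read off entries of $AE_{jk} - E_{jk}A$: one computes $A E_{jk} = \sum_\ell a_{\ell j} E_{\ell k}$ and $E_{jk} A = \sum_\ell a_{k\ell} E_{j\ell}$. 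The plan is to select a family of $n$ such inputs $E_{jk}$ and track one carefully chosen output coordinate for each, arranged so that the resulting $n \times n$ coefficient matrix is triangular with the nonzero entries $a_{ip}$, $a_{iq}$ (or differences/products thereof) on the diagonal. Concretely, using the pair $p \ne q$ one shows the images $L_A(E_{jp})$ for $j$ ranging over $n$ suitable indices are independent: the $(i,\cdot)$-rows of these images involve $a_{ip}$ in a way that cannot be cancelled, giving $n$ genuinely new directions. I would organize this as: first handle the $j = i$ case to produce one relation, then vary the second index to produce the remaining $n-1$, checking independence via a staircase pattern.

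The main obstacle I anticipate is the bookkeeping to guarantee exactly $n$ independent outputs uniformly in $A$ — the naive count of nonzero coordinates of $AE_{jk} - E_{jk}A$ overcounts because different $E_{jk}$ can hit overlapping coordinates, and one must avoid accidental cancellation when $A$ has further structure (e.g.\ repeated entries). The way around this is to not aim for a slick basis-free argument but to commit to the transpose reduction, fix the bad row index $i$ and the two columns $p,q$, and then explicitly write down the $n$ matrices $B^{(1)},\dots,B^{(n)}$ whose $L_A$-images one claims are independent, most likely $B^{(m)} = E_{mp}$ for $m = 1,\dots,n$, and verify the $n \times n$ minor formed by the coordinates $(i,1),(i,2),\dots,(i,n)$ (or a permutation thereof) of $L_A(B^{(m)})$ is a nonzero polynomial in the entries of $A$ that is in fact forced nonzero by $a_{ip} a_{iq} \ne 0$. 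Once that determinant is shown to be (up to sign) a monomial or a small product in $a_{ip}, a_{iq}$ and hence nonzero, the rank bound $\operatorname{rank} L_A \ge n$ follows, and with it $\dim C(A) \le n^2 - n$ and the claimed count $\#\,\cC_n(A,H) \ll H^{n^2-n}$.
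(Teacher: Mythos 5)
Your overall strategy is sound and is essentially the paper's argument in linear-algebra clothing: the paper fixes the top row $\vb_1$ of $B$ and reads the $n$ entries of the first row of $AB - BA = 0$ as $n$ independent linear constraints, one per column of $B$, each pinning down one of $b_{2j},\ldots,b_{nj}$; your reformulation says exactly that $\operatorname{rank} L_A \ge n$, hence $\dim\ker L_A \le n^2 - n$, and then counts integer points in that rational subspace. The final counting step is fine (any rank-$d$ sublattice of $\Z^{n^2}$ has covolume $\ge 1$, so the box $[-H,H]^{n^2}$ contains $O(H^d)$ of its points), though the paper's column-by-column count avoids invoking this.

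However, your concrete candidate for the test matrices does not work. With the bad row $i$ and nonzero columns $p \ne q$, take $B^{(m)} = E_{mp}$ and look at row $i$ of the image: one computes
\[
\(L_A(E_{mp})\)_{ir} = a_{im}\,\delta_{pr} - \delta_{mi}\,a_{pr},
\]
so for every $m \ne i$ the row $\(L_A(E_{mp})\)_{i,\cdot}$ equals $a_{im}\,\mathbf{e}_p^{\mathsf T}$, a scalar multiple of the single standard vector $\mathbf{e}_p$. Thus the $n-1$ rows with $m \ne i$ span at most a line, and the proposed $n \times n$ minor on coordinates $(i,1),\ldots,(i,n)$ is \emph{identically} zero for $n \ge 3$ (it happens to be $a_{12}^2$ when $n=2$). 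So the determinant is not ``a nonzero polynomial forced nonzero by $a_{ip}a_{iq}\ne 0$''; it is the zero polynomial, and the rank claim does not follow from this choice.

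The fix is to put the privileged index in the \emph{row} slot rather than the column slot. Since row $i$ of $A$ has two nonzero entries, at least one, say $a_{iq}$, has $q \ne i$. Take $B^{(j)} = E_{qj}$ for $j = 1, \ldots, n$. Then
\[
\(L_A(E_{qj})\)_{ir} = a_{iq}\,\delta_{jr} - \delta_{qi}\,a_{jr} = a_{iq}\,\delta_{jr}
\]
because $q \ne i$, so row $i$ of $L_A(E_{qj})$ is $a_{iq}\,\mathbf{e}_j^{\mathsf T}$. As $j$ varies these are manifestly $n$ linearly independent vectors (the relevant $n\times n$ minor is $a_{iq}^{\,n} \ne 0$), giving $\operatorname{rank} L_A \ge n$ and the bound. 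Unwinding, this is exactly the paper's proof: fixing $\vb_1$ and examining $\(AB-BA\)_{1j}$ for $j = 1, \ldots, n$ shows the coefficient $a_{1q}$ (with $q\ge 2$, $a_{1q}\ne 0$) forces $b_{qj}$ to satisfy a nontrivial linear equation in each column $j$, leaving $O(H^{n-2})$ choices per column.
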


\begin{proof} Without loss of generality we can assume that $A = (a_{ij})_{i,j=1}^n$ 
satisfies $a_{11} \ne 0$ and also $a_{1k} \ne 0$ for some $k  \ge 2$. 
We now consider matrices  $B = (b_{ij})_{i,j=1}^n \in \cM_n(\Z;H)$  with a fixed 
top row $\vb_1=(b_{11}, \ldots, b_{1n})$.  Examining the top row of each of the products $AB = BA$, 
we see that for every $j = 1, \ldots, n$ we have
\[
 \sum_{i=1}^n a_{1i} b_{ij} = \sum_{i=1}^n b_{1 i} a_{ij},  
\]
 and thus
\[
 \sum_{i=2}^n a_{1i} b_{ij} = \sum_{i=1}^n b_{1 i} a_{ij}  - a_{11} b_{1j}.
\]
Since the right hand side is fixed and  $a_{1k} \ne 0$ we see that there are 
$O\(H^{n-2}\)$ possible values for $(b_{2j}, \ldots, b_{nj})$ and hence 
$O\(H^{(n-2)n}\)$ possible values for the $(n-1)\times n$ matrix below the top row $\vb_1$ of $B$.
Since $\vb_1$ can take $O\(H^{n}\)$ possible values the result follows.
\end{proof} 

We remark that the identity matrix shows that the condition on $A$ in Lemma~\ref{lem: Comm Matr} is necessary.

\subsection{Matrices over finite fields with a given characteristic polynomial}
Let $q$ be a power of a prime number. We now denote by
$P_n(\F_q,f)$ the number of $n\times n$ matrices $A\in
\cM_n(\F_q)$ with characteristic polynomial $f$ over the finite field
$\F_q$ with $q$ elements.

We now recall  Reiner's result~\cite[Theorem~2]{Rei}.
It gives an explicit formula on the number of matrices with a given characteristic polynomial over $\F_q$.
However, for our purposes, we only need the following asymptotical formula for $P_n(\F_q,f)$, 
 which follows from this explicit formula.

\begin{lemma}\label{lem:Rei}  For any $n \ge 1$, uniformly over monic polynomials $f \in \F_q[X]$ of degree $n$, we have 
\[
P_n(\F_q,f) = \(1 + o(1)\) q^{n^2-n},  \qquad \text{as}\ q \to \infty.
\] 
\end{lemma}

\subsection{Properties of the variety of matrices with fixed characteristic polynomial}
Finally, we also need the following properties of the variety of matrices (over $\C$) 
with a given characteristic polynomial.
We referee to Appendix~\ref{app: deg Vf} for more precise geometry
results.  

\begin{lemma}
  \label{lem:IrredDimDeg}
  Let $n\ge 2$, $f\in \Z[X]$ monic of degree $n$ and $\cV_f \subseteq \C^{n\times n}$   the algebraic set of matrices with complex
  entries and  characteristic polynomial $f$. Then  $\cV_f$ is 
  an absolutely irreducible non-linear variety of dimension $\dim \cV_f = n^2 - n$. 
\end{lemma}   

\begin{proof}
  For brevity, we write $\cV$ for $\cV_f$. 
  Let $p$ be a prime number and 
  let $\overline f\in\F_p[X]$ denote the reduction of $f$ modulo $p$.
  We consider the algebraic subset $\overline{\cV}_{\F_p}$ defined over
  $\F_p$ of affine
  $n^2$-space consisting of $n\times n$ matrices with
  characteristic polynomial $\overline f$.

  A priori, the $\F_p$-irreducible components of $\overline{\cV}_{\F_p}$ may fail to be
  absolutely irreducible. 
  However, there exists a finite field extension $\F_q/\F_p$     
  such that all $\F_q$-irreducible components
  of $\overline{\cV}_{\F_q}$, which is $\overline{\cV}_{\F_p}$ considered as
  a variety defined over $\F_q$, are absolutely
  irreducible.

  All $\F_q$-irreducible components
  of 
  $\overline{\cV}_{\F_q}$  have dimension at least $n^2-n>0$ as $\overline{\cV}_{\F_q}$ is cut out by $n$ polynomial equations.
  Let $\sigma$ denote the
  number of $\F_q$-irreducible components of
  $\overline{\cV}_{\F_q}$ of dimension $r=\dim \overline{\cV}_{\F_q}$. Then $\sigma\ge 1$.

  On the one hand,  by Lemma~\ref{lem:LaWe} we have
  \[\#\overline{\cV}_{\F_q}(\F_q) =
  \sigma q^r + O(q^{r-1/2}).
  \]
  The error term
  depends on $n$ and the sum of the degrees of all $\F_q$-irreducible
  components  of $\overline{\cV}_{\F_q}$. However,
  the latter is bounded from above solely in terms of $n$ by a
  standard application of B\'ezout's Theorem. In particular, $\sigma$
  is bounded from above in terms of $n$.

  On the other hand,
  $\#\overline{\cV}_{\F_q}(\F_q) \le P_n(\F_q,\overline{f})$. So, by Lemma~\ref{lem:Rei} we have
  \[
  \#\overline{\cV}_{\F_q}(\F_q) \le
  (1+o(1))q^{n^2-n}.
  \]

  For all $p$ large enough in terms of $n$ we conclude that   $r\le n^2-n$
  and $\sigma=1$. Therefore,
  all $\F_q$-irreducible components of $\overline{\cV}_{\F_q}$ have dimension
  $n^2-n$ and thus  $\overline \cV_{\F_q}$ is
  absolutely irreducible of dimension $n^2-n$.

  Thus for all $p$ large enough,    $\overline \cV_{\F_p}$ is absolutely
  irreducible.  
  This implies that $\cV$, an affine variety defined over $\Q$, is absolutely irreducible.

We next show that $\cV$ is not affine linear. Assume it is, then this would imply that $\cV$ is closed under linear  combinations of the form $\mu A + (1-\mu) B$, $\mu \in \R$, of matrices $A,B \in \cV$.
 Let $f(X)=\prod_{i=1}^n(X-\lambda_i)$, where $\lambda_i\in \C$, $i=1,\ldots,n$. We define 
 \[
 A=\begin{pmatrix} \lambda_1 &1&0&\ldots &0\\ 0 & \lambda_2 &0&\ldots &0 \\ \vdots&\vdots &\vdots &\ldots & \vdots\\ 0  &0 &0& \ldots &\lambda_n\end{pmatrix}  \mand B=A^t.
\]
 We thus have that $\mu A +(1-\mu) B \in \cV$ for any $\mu\in\R$. Therefore, taking for example $\mu=1/2$, $\frac{1}{2} (A +B) \in \cV$ and thus it has characteristic polynomial $f$. However, simple computation shows that  in fact the characteristic polynomial of $\frac{1}{2} (A +B)$ is
 \[
 \det\(X\cdot I_n-\frac{1}{2} (A +B)\)=f(X) - \frac{1}{4}\prod_{i=3}^n(X-\lambda_i),
\]
 which is a contradiction. Therefore $\cV$ is not affine linear, which concludes the proof. 
 \end{proof}

\begin{rem} Lemma~\ref{lem:IrredDimDeg} shows  that $\deg \cV_f>1$. In fact, much more is true.  Let 
\[
D_f =  \frac {n!} {\mu_1! \cdots \mu_s!},
\]
where $\mu_1, \ldots, \mu_s$ are multiplicities of the roots of $f$. 
We observe 
 that $\cV_f$ has at least $D_f$ intersection points with a linear subspace 
 of dimension $n$   formed by diagonal matrices.
 To see this we simply put the roots 
 of the fixed characteristic polynomial $f$ on the main diagonal of such matrices (with corresponding multiplicities), 
 which can be done in exactly  $D_f$ ways. Hence, for the variety $\cV_f$ in Lemma~\ref{lem:IrredDimDeg},
 we have   $\deg \cV_f \ge D_f$. 
  In particular, if $f$ has no multiple roots, then  $\deg \cV_f = n!$. We show in Appendix~\ref{app: deg Vf} 
  that this holds for any $f$.  
  \end{rem}

  \section{Proof of Theorem~\ref{thm:gamma23}}  

\subsection{The case $n=2$}
In this case we count matrices
\[
A=\begin{pmatrix}  a_{11} &  a_{12}\\  a_{21} & a_{22} \end{pmatrix} \in \cM_2\(\Z; H\)
\]
satisfying
   \[
   \det A=a_{11}a_{22}-a_{12}a_{21}  =d \mand \Tr A =a_{11}+a_{22}=t,
   \]
 where $f(X) = X^2 - tX +d$ is the characteristic polynomial of $A$. 
 
Writing $a_{22} = t - a_{11}$,  this leads us to the equation    
   \[
   a_{11} \(t - a_{11}\)   - a_{12} a_{21} = d.
  \]
  Clearly, there are $O(1)$ values of $ a_{11}$ for which $D = 0$, 
  where $D = a_{11} \(t - a_{11}\)   -  d$. In this case $a_{12} a_{21} =0$ and
  we obtain $O(H)$ such matrices 
  $A \in \cM_2\(\Z; H\)$. 
For any of the remaining  $O(H)$ values of $ a_{11}$ for which $D \ne 0$,  
we recall   the  bound~\eqref{eq:tau}
and  obtain $\tau(D) = D^{o(1)} = H^{o(1)}$ possible values for $\(a_{12}, a_{21}\)$. Therefore, we have shown that $\gamma_2\ge 1$. 
Recalling the upper bound~\eqref{eq:gamma_n low} for $n=2$, we obtain that $\gamma_2=1$.

 \subsection{The case $n=3$} We note that some algebraic manipulations below were done using SageMath\cite{SageMath}. But they are simple
and elementary enough to be verified directly. 

Let $H\ge 0$ and
\begin{equation}
  \label{eq:charpoly3x3}
 f(X)=X^3+\ell_1  X^2+\ell_2    X+\ell_3     \in\Z[X]. 
\end{equation}
Let $\mathcal{R}_3(H;f)$ denote the set of matrices
\begin{equation}
  \label{def:A}
A = \begin{pmatrix} a_1& a_2 & a_3\\
  b_1& b_2 & b_3\\
  c_1& c_2 & c_3
\end{pmatrix}  
\end{equation}
with characteristic polynomial $f$  and 
entries  in $\Z\cap [-H,H]$.

The characteristic polynomial of $A$ is an
\textit{affine} linear map in the first
row of $A$. The associated linear map is
given by 
\[
  L_A\colon ~(a_1,a_2,a_3)\mapsto
  \det\begin{pmatrix} -a_1& -a_2 &
-a_3\\ -b_1& X-b_2 & -b_3\\ -c_1& -c_2 & X-c_3
  \end{pmatrix} = r_1 X^2 + r_2 X + r_3.
\]
We  represent and identify this map with the matrix
\[
  L_A = \begin{pmatrix}
    -1 & 0 & 0 \\
    b_2+c_3 & -b_1 & -c_1 \\
    b_3c_2-b_2c_3 & -b_3c_1+b_1c_3 & b_2c_1-b_1c_2
  \end{pmatrix},
\]
when working with the standard basis on $\Z^3$ and the basis $(X^2,X,1)$
of quadratic polynomials. 
Then
\[
  Q(A) = b_1b_2c_1 + b_3c_1^2 - b_1^2c_2 - b_1c_1c_3.
\]
is the determinant of $L_A$.  
Let us highlight that $Q(A)$ does not
depend on $a_1,a_2,a_3$.

If $Q(A)\ne 0$, then $A$ is the only matrix with characteristic
polynomial $f$ and second and third row equal to the corresponding
rows of $A$.
 
Generically, we express $a_2$
from the first row as a rational function in the remaining rows and in
$(\ell_1,\ell_2,\ell_3)$, see~\eqref{eq:QRdivisor} below. This imposes a  restriction when all quantities are
integers. Indeed,  one expects a rational and non-polynomial function to map integral
vectors to rational vectors.

\begin{lemma}
  \label{lem:Qzero1}
  We have
  \[
    \#\{ A \in \mathcal{R}_3(H;f):~Q(A)=0\text{ and }
    a_2b_1+a_3c_1\ne 0\} \le H^{4+o(1)}. 
  \]
\end{lemma}

\begin{proof}
  Fix $A$ in $\mathcal{R}_3(H;f)$.
  As $A$ has characteristic polynomial $f$ we consider the
  quadratic and linear term to find
  \[
    a_1+b_2+c_3 = -\ell_1   \quad\text{and}\quad
    -a_2b_1 + a_1b_2 - a_3c_1 - b_3c_2 + a_1c_3 + b_2c_3 = \ell_2   .
  \]
  We eliminate $a_1$ from the second equation and find that
  \begin{equation}
    \label{eq:abac=L}
     a_2b_1+a_3c_1= - \cL_A,
  \end{equation}
where
  \begin{equation}
    \label{eq:cond1}
 \cL_A= b_2^2 + b_3c_2 + b_2c_3 + c_3^2 + b_2\ell_1   + c_3\ell_1  
    + \ell_2 . 
  \end{equation} 

  Suppose $Q(A)=0$ and $a_2b_1+a_3c_1\ne 0$. In particular,
  $(b_1,c_1)\ne (0,0)$ and  $\cL_A \ne 0$ . Then 
  $(b_2,b_3,c_2,c_3)$ lies in the rank $3$ lattice
\[
    \Lambda_{(b_1,c_1)} = \left\{(x,y,z,t) \in\Z^4 :~  x (b_1c_1) + y c_1^2 + z (-b_1^2) + t (-b_1c_1)=0\right\}.
\]

  Let $\gcd(b_1,c_1)=r$.
  We thus have $(b_1,c_1)=r(u_1,v_1)$, where $u_1,v_1$ are coprime integers.
  Note that  $\Lambda_{(b_1,c_1)}$  equals
  $\Lambda_{(u_1,v_1)}$. Moreover, its
  determinant is
  \[\(\(u_1v_1\)^2 + v_1^4 + u_1^4 + \(u_1v_1\)^2\)^{1/2} =
  u_1^2+v_1^2.
  \]
  We count the number of lattice points in the box $[-H,H]^4$ using
  a result of 
  Schmidt~\cite[Lemma~2]{Schm}.
  Indeed, $[-H,H]^4$ is contained in  the Euclidean ball of radius $H$
  centred at the origin.
  In the notation of~\cite{Schm}
 we have $d(\Lambda^{k(|-i)})\ge 1$ since
  $\Lambda_{(u_1,v_1)}\subseteq\Z^4$.
  So we get
  \begin{equation}
    \label{eq:schmidtest}
   \#\( \Lambda_{(u_1,v_1)}\cap [-H,H]^4\) =
  O(H^3/({u}_1^2+{v}_1^2) + H^2).
  \end{equation}

  Let $N(H)$ denote the cardinality of the set of matrices in
  the statement of the lemma.
  We count these matrices by putting them into the following form
  \[
    A = \begin{pmatrix}-b_2-c_3-\ell_1  & a_2 & a_3\\
      ru_1& b_2 & b_3\\
      rv_1& c_2 & c_3
    \end{pmatrix}  
  \]
  with $\gcd(u_1,v_1)=1$ and  $r\ge 1$ in $\Z$.
  Then
  $r \mid \cL_A$, where $\cL_A$ is given by~\eqref{eq:cond1}.
  We find
  \begin{align*}
    N(H) &     \le \sum_{\substack{(u_1,v_1)\in[-H,H]^2\\  
        \gcd(u_1,v_1)=1}}
    \sum_{\substack{(b_2,b_3,c_2,c_3)\in \\
        \Lambda_{(u_1,v_1)}\cap[-H,H]^4}}
    \sum_{\substack{ r\mid  \cL_A }}
    \sum_{\substack{(a_2,a_3)\in[-H,H]^2\\  
    a_2 r u_1+a_3r v_1=-\cL_A}} 1.
  \end{align*}

  Let $(a_2,a_3),(a^*_2,a^*_3)$ appear as indices in the 
  fourth, that is, inner-most, sum.
  Then $(a_2-a^*_2,a_3-a^*_3) = \lambda (v_1,-u_1)$ for some
  $\lambda\in \Z$. We have $|\lambda|\le 2H/ \max\{|u_1|,|v_1|\}$.
  Therefore, the number of terms in the fourth sum is at most 
  $O(H/\max\{|u_1|,|v_1|\})$.

  Recall that $\cL_A \ne 0$. We have $\ell_1   \ll H$ and
  $\ell_2   \ll H^2$, so  $\cL_A  \ll H^2$. Thus, by~\eqref{eq:tau},  the third sum 
  is over at most $H^{o(1)}$  values of $r$.  
  
  Combining the  observations for the fourth and third sum yields
\[
    N(H)\le   \sum_{\substack{(u_1,v_1)\in[-H,H]^2\\  
        \gcd(u_1,v_1)=1}}
         \frac{H^{1+o(1)}}{\max\{|u_1|,|v_1|\}}  \#\( \Lambda_{(u_1,v_1)}\cap [-H,H]^4\) . 
\]
 
 Using~\eqref{eq:schmidtest},  we  find
\[
    N(H) \le \sum_{\substack{(u_1,v_1)\in[-H,H]^2\\  
            \gcd(u_1,v_1)=1}} \left(\frac{H^{4+o(1)} }{\max\{|u_1|,|v_1|\}^3}
      +\frac{H^{3+o(1)} }{\max\{|u_1|,|v_1|\}}\right).
\]
 Replacing the condition
  $\gcd(u_1,v_1)=1$ by $(u_1,v_1)\ne (0,0)$ and using that  
  \[
    \sum_{\substack{(u_1,v_1)\in[-H,H]^2\\ 
        (u_1,v_1)\ne (0,0)}}
    \frac{1}{\max\{|u_1|,|v_1|\}^\kappa}
    \ll 
      \begin{cases}
        H &\text{if } \kappa = 1, \\
        1 &\text{if }  \kappa = 3,
      \end{cases}
  \]
  we complete the proof. 
  \end{proof}

\begin{lemma}
  \label{lem:Qzero2}
  We have
  \[
    \#\{ A \in \mathcal{R}_3(H;f) :~Q(A)=0, \ c_1\ne 0\text{
      and }
    a_2b_1+a_3c_1=0\} \le H^{3+o(1)}.
  \]
\end{lemma}

\begin{proof}
   Suppose $A$ is any integer $3\times 3$ matrix as in~\eqref{def:A} with characteristic
  polynomial $f$ as in~\eqref{eq:charpoly3x3} and such that $Q(A)=0$, $c_1\ne 0$ and
  $a_2b_1+a_3c_1=0$.
    
  In the equation $\det A = -\ell_3    $ we eliminate $a_1$ using
  $-\ell_1  =a_1+b_2+c_3$. Thus
  \begin{align*}
    -a_3b_2c_1 + a_2b_3c_1 & + a_3b_1c_2 + b_2b_3c_2 - a_2b_1c_3 \\
    & - b_2^2c_3 +  b_3c_2c_3 - b_2c_3^2 + b_3c_2\ell_1   - b_2c_3\ell_1   + \ell_3    =0.
  \end{align*}
  We further substitute $a_3=-a_2b_1/c_1$ (which is possible since $c_1 \ne 0$), and multiply
  with $c_1$ to find
  \begin{align*}
    b_2b_3c_1c_2 - b_2^2c_1c_3 & + b_3c_1c_2c_3 - b_2c_1c_3^2 + b_3c_1c_2\ell_1  
    - b_2c_1c_3\ell_1     
    \\
    &  + c_1\ell_3 +(b_1b_2c_1 + b_3c_1^2 - b_1^2c_2 - b_1c_1c_3)a_2 = 0.
  \end{align*}
  The factor in front of $a_2$ equals $Q(A)=0$. Removing this term, we can then cancel 
  $c_1\ne 0$ and get
  \begin{equation}
    \label{eq:Qzeroeq1}
    b_2b_3c_2 - b_2^2c_3 + b_3c_2c_3 - b_2c_3^2 + b_3c_2\ell_1  
    - b_2c_3\ell_1   + \ell_3    =0.
  \end{equation}
  
  We note that~\eqref{eq:Qzeroeq1} can be rewritten as
    \begin{equation}
    \label{eq:Qzeroeq1.1}
    (b_2+c_3+\ell_1)(b_3c_2-b_2c_3)=-\ell_3.
  \end{equation}

  \textbf{Case 1:} We have $\ell_3\ne 0$.

  Since $\ell_3$ is fixed and of size $O(H^3)$, by~\eqref{eq:tau} we see that 
  both $b_2+c_3+\ell_1$ and $b_3c_2-b_2c_3$ take each  $H^{o(1)}$
  possible values. For each such value of $b_2+c_3+\ell_1$ and
  $b_3c_2-b_2c_3$, there are $O(H)$ possible pairs $(b_2,c_3)$ (coming
  from $b_2+c_3+\ell_1$ being fixed). For each such fixed tuple,
  by~\eqref{eq:tau} again, there are $H^{o(1)}$ possibilities for each
  of $b_3$ and $c_2$ if $b_3c_2\not=0$. Therefore, in this case there are
  $H^{1+o(1)}$ possible tuples $(b_2,b_3,c_2,c_3)$ with
  $b_3c_2\not=0$.

  What if $b_3c_2=0$? Then $b_2c_3$ takes $H^{o(1)}$
  possible values. We write $c_3 = x-\ell_1-b_2$ for one of at most
  $H^{o(1)}$ possible $x$. So $b_2(b_2+\ell_1-x)$ takes at most
  $H^{o(1)}$ values and the same follows for $b_2$. There are at most
  $H^{1+o(1)}$ possibly $(b_2,b_3,c_2,c_3)$.
  
  Moreover, since  $a_2b_1+a_3c_1=0$, we also have at most $H^{2+o(1)}$ possible tuples $(a_2,a_3,b_1,c_1)$. 
  
  Since $a_1$ is fixed by the trace, putting  together the above bounds we obtain $H^{3+o(1)}$ possible matrices $A$.
  
  \textbf{Case 2:} We have $\ell_3= 0$.
  
  Using now the identity~\eqref{eq:abac=L} in the proof of Lemma~\ref{lem:Qzero1} and $a_2b_1+a_3c_1=0$, we find 
  that $\cL_A = 0$. That is, 
  \begin{equation}
    \label{eq:Qzeroeq2}
    b_2^2 + b_3c_2 + b_2c_3 + c_3^2 + b_2\ell_1   + c_3\ell_1  
    + \ell_2   =0.
  \end{equation}
  From~\eqref{eq:Qzeroeq1.1} we obtain that 
  \begin{equation}
    \label{eq:Case 2b}
    b_3c_2-b_2c_3=0
  \end{equation}
  or
  \begin{equation}
    \label{eq:Case 2a}
    b_2+c_3+\ell_1=0.
  \end{equation}

\textbf{Subcase 2a:} We have  $b_3c_2-b_2c_3=0$, so~\eqref{eq:Case
  2b}.

Replacing $b_3c_2=b_2c_3$ in~\eqref{eq:Qzeroeq2}, we obtain
\begin{equation}
  \label{eq:b2c3quad}
  (b_2+c_3)^2 + \ell_1(b_2+c_3)+\ell_2=0,
\end{equation}
which implies that $b_2+c_3$ takes at most two values.

  First, suppose that $b_3c_2\not=0$. We
  obtain $O(H)$ possibilities for the pairs $(b_2,c_3)$. For each such
  pair, since $b_3c_2=b_2c_3$ is of size $O(H^2)$,
  by~\eqref{eq:tau} we have $H^{o(1)}$ possibilities for each of $b_3$
  and $c_2$. This leaves us with $H^{1+o(1)}$ possibilities for
  $(b_2,b_3,c_2,c_3)$.

  Second, say $b_3c_2=0$. Then $b_2c_3=0$.
  By~\eqref{eq:b2c3quad} there are at most 2
  possibilities for $b_2+c_3$. So at most four possibilites for
  $(b_2,c_3)$. Thus $O(H)$ possibilities for   $(b_2,b_3,c_2,c_3)$.

\textbf{Subcase 2b:} We have $b_2+c_3+\ell_1=0$, so~\eqref{eq:Case 2a}
holds.

Now there is a single possibility for $b_2+c_3$.

There are at most $O(H)$ possible tuples $(b_2,c_3)$. We fix one such tuple. 
Therefore, from~\eqref{eq:Qzeroeq2},  $b_3c_2$ is fixed of size
$O(H^2)$. Suppose $b_3c_2\not=0$, by~\eqref{eq:tau} there are thus again
 $H^{o(1)}$
possibilities for each of $b_3$ and $c_2$.

What if $b_3c_2=0$? We insert $c_3=-b_2-\ell_1$ from~\eqref{eq:Case 2a} 
into~\eqref{eq:Qzeroeq2} to find $b_2^2+b_2\ell_1+\ell_2=0$. Thus there are
at most 2 possibilities for $(b_2,c_3)$ and at most $O(H)$
possibilities for $(b_2,b_3,c_2,c_3)$. 

Thus, in each subcase  there are  in total $H^{1+o(1)}$ possible tuples $(b_2,b_3,c_2,c_3)$. Since, as in Case~1, using that $a_2b_1+a_3c_1=0$, there at most $H^{2+o(1)}$ possible tuples $(a_2,a_3,b_1,c_1)$, we conclude the proof.
\end{proof}
 
Suppose $A$ as in~\eqref{def:A} has characteristic polynomial $f$ as
in~\eqref{eq:charpoly3x3}. A
calculation shows that 
\begin{equation}
  \label{eq:QRdivisor}
  \(2b_2c_1c_2 - b_1c_2^2 + c_1c_2c_3 + c_1c_2\ell_1    + c_1^2 a_2\)Q(A) = R(A)
\end{equation}
with 
\begin{equation}
  \label{def:R}
    R(A) = -c_1^3f((b_2c_1-b_1c_2)/c_1);  
\end{equation}
the right-hand side is well-defined even if $c_1$ vanishes as $f\in\Z[X]$ has
degree $3$. 
The expression for $R(A)$ is independent of $b_3$ and $c_3$.
We make the following  useful observations: if $A$ is an integer matrix, then

\begin{itemize}
\item $Q(A)\in\Z$ and $R(A)\in \Z$;
\item if $Q(A)\ne 0$, then $Q(A)\mid R(A)$.
\end{itemize}

As observed above, $Q(A)$ depends only on the bottom two rows of
$A$. If $Q(A)\ne 0$ the top row
$(a_1,a_2,a_3)$ is
uniquely determined by $f$ and the bottom two rows.

\begin{lemma}
  \label{lem:RAzero}
  We have 
\[\#\{ A \in \mathcal{R}_{3}(H;f) :~Q(A)\ne 0, \ c_1\ne 0\text{ and }R(A)=0\} \le H^{4+o(1)}.
\]
\end{lemma}

\begin{proof}
  Let $A$ be from the above set. 
  Then $(b_2c_1-b_1c_2)/c_1$ is one of the at most three eigenvalues
  of $A$
  by~\eqref{def:R} and $R(A)=0$.
  We split our set  
  into two subsets  depending on whether $b_1c_2\ne 0$ or $b_1c_2=0$.

  \textbf{Case 1:}   Matrices with $b_1c_2\ne 0$.
  
  We sum over the $O(H^4)$ possibilities of $(b_2,b_3,c_1,c_3)$ among
  the entries of $A$.
  For such a quadruple there
  are at most $3$ possibilities for $b_1c_2$ as $(b_2c_1-b_1c_2)/c_1$
  is a root of $f$. This product is non-zero
  and $|b_1c_2|\le H^2$.  By~\eqref{eq:tau}
  there are at most $H^{o(1)}$ possible pairs $(b_1,c_2)$. Specifying $(b_1,c_2)$ leaves at most one
  possibility for the first row $(a_1,a_2,a_3)$ since $Q(A)\ne 0$.
  This leaves
  $H^{4+o(1)}$ matrices with $b_1c_2\ne0$.

  \textbf{Case 2:}  Matrices with $b_1c_2=0$.
  
  Then $(b_2c_1-b_1c_2)/c_1=b_2$ is
  a root of  the characteristic polynomial $f$. So either
  $(b_1,b_2)$ or $(b_2,c_2)$ comes from a set of at most $3$ elements.
  The count for the bottom two rows is $O(H^4)$. But they fix the top
  row and this completes the second case.
\end{proof}

\begin{lemma}
  \label{lem:c1nonzero}
  We have 
 \[
 \#\{ A\in  \mathcal{R}_3(H;f) :~c_1\ne 0\} \le H^{4+o(1)}.
 \]
\end{lemma}

\begin{proof}
  We separate our counting in three classes.
  
  \textbf{Case 1:} Matrices with $Q(A)=0$.

  The number of these matrices  is at most
  $H^{4+o(1)}$ by combining Lemmas~\ref{lem:Qzero1} and~\ref{lem:Qzero2}.

  \textbf{Case 2:} Matrices with $Q(A)\ne 0$ and $R(A)=0$.

  In this case, we apply Lemma~\ref{lem:RAzero}.

  \textbf{Case 3:} Matrices $A\in\mathcal{R}_3(H;f)$ with $Q(A)\ne 0$
  and $R(A)\ne 0$.
  
  In this case, the
  bottom two rows uniquely determine $(a_1,a_2,a_3)$. 
  We have $Q(A)\mid R(A)$ by~\eqref{eq:QRdivisor}.

  Note that $\ell_i \ll H^i$ for all $i= 1,2,3$. So $R(A) \ll H^6$ by~\eqref{def:R}. 
  Recall that $R(A)$ only involves $b_1,b_2,c_1,c_2$ and $f$.   
  For fixed $(b_1,b_2,c_1,c_2)$ the number
  of possible divisors of the non-zero $R(A)$ is $H^{o(1)}$.
  For such a quadruple,  by~\eqref{eq:tau},  the number of possible values of 
  $Q(A)$ is $H^{o(1)}$.

  Let $v \mid R(A)$ be such a divisor.
  Let us keep $(b_1,b_2,c_1,c_2)$ fixed.  Then $Q(A) = v$ translates to
  $c_1(c_1 b_3  - b_1 c_3)=w$ in $(b_3,c_3)$, where   $w$ is a function
  of $v$, and thus there are at most  $H^{o(1)}$
  values $w$. As $c_1\ne 0$ and $(b_1,b_2,c_1,c_2)$ are fixed, the
  equation $Q(A)=v$ determines a line in $b_3,c_3$.

  Furthermore, any two points $(b_3,c_3)$ and $(u_3,v_3)$ on this line
  satisfy
  \[
    c_1 (b_3-u_3) - b_1 (c_3-v_3)=0.
  \]
  So 
  \[
  (u_3,v_3) = (b_3,c_3) + \frac{\lambda}{\gcd(b_1,c_1)}(b_1,c_1)
\]
  for some $\lambda\in\Z$.
  Therefore, for fixed $w$,  the number of points $(b_3,c_3)$
  on this line and in  $[-H,H]^2\cap\Z^2$ is $O\(H \gcd(b_1,c_1)/\max\{|b_1|,|c_1|\}\)$.

  Our total point count in Case~3 with $c_1\ne 0$ is thus
  \begin{align*}
    \sum_{\substack{(b_1,b_2,c_1,c_2) \in[-H,H]^4\\  
    c_1\ne 0}}
    H^{1+o(1)} & \frac{\gcd(b_1,c_1)}{\max\{|b_1|,|c_1|\}}\\
    & \le H^{3+o(1)}
    \sum_{\substack{ (b_1,c_1)\in[-H,H]^2\\  
    (b_1,c_1)\ne \{(0,0)\}}}
    \frac{\gcd(b_1,c_1)}{\max\{|b_1|,|c_1|\}}.
  \end{align*}
We now easily derive
  \begin{align*}
    \sum_{r=1}^H 
    \sum_{\substack{ (b_1,c_1)\in[-H,H]^2\\  
    \gcd(b_1,c_1)=r}} &
    \frac{1}{\max\{|b_1/r|,|c_1/r|\}}\\
   & \le \sum_{r=1}^H \sum_{\substack{(b_1,c_1)\in [-H/r,H/r]^2\\ 
      \\ 
      (b_1,c_1)\ne (0,0)}}
    \frac{1}{\max\{|b_1|,|c_1|\}} \\
    &\ll \sum_{r=1}^H H/r = H^{1+o(1)}.
  \end{align*}
 So the contribution  from matrices in Case~3 is at most $H^{4+o(1)}$.
\end{proof}

We are now able to conclude the proof of Theorem~\ref{thm:gamma23} for
$n=3$. 
  Let $A\in \mathcal{R}_3(H;f)$.
  If $c_1$, the lower left entry of $A$, is non-zero, then $A$ is covered by
  the count in Lemma~\ref{lem:c1nonzero}.

  If we swap the second and
  third rows and columns of $A$, the characteristic polynomial and
  norm of $A$  stay the same. This operation swaps $b_1$ and $c_1$. So if
   $b_1\ne 0$ we can again appeal to 
  Lemma~\ref{lem:c1nonzero}.

  Therefore, it suffices to count $A\in \mathcal{R}_3(H;f)$ with $b_1=c_1=0$.
  In this case, $f$ is divisible by $X-a_1$. In particular, there are
  at most three values for $a_1$. We count the number of
  $(b_2,b_3,c_2,c_3)$ by applying Theorem~\ref{thm:gamma23}
  in the case $n=2$.    Their number is
  $H^{1+o(1)}$.
  Consider the remaining entries $(a_2,a_3)$ as free. Then the
  number of
  possibilities for $A$ is at most $H^{3+o(1)}$.

  \section{Proof of Theorem~\ref{thm:gamman}}  

\subsection{Determinant polynomials of some linear maps associated with characteristic polynomials} 
For $H\ge 2$
let $\mathcal{R}_n(H;f)$ denote the set of $n\times n$-matrices
with characteristic polynomial $f\in\Z[X]$ and 
coefficients in $[-H,H]\cap \Z$. We assume throughout this section that $n\ge 2$.

We use some ideas from the case $n = 3$, however for notational convenience 
in our definition of the polynomials $Q(A)$  instead of the bottom $n-1$ rows,  we work with the 
first  $n-1$ columns.

More precisely, given an $n\times(n-1)$ matrix $A$ and a column vector $\va$ of  
dimension $n$, we write  $(A\vert \va)$ for the $n\times n$ matrix obtained by
augmenting $A$ by $\va$. 
Let $I^*_n$ denote the first $n-1$ columns of $I_n$.

Consider $n(n-1)$ independent variables
$a_{ij}$, $1\le i\le n, 1\le j\le n-1$, of the
polynomial ring $\Z[a_{ij}:~1\le i\le n, \ 1\le j\le n-1]$. 

Let for the moment $A$ denote the $n\times(n-1)$ matrix $A=(a_{ij})_{\substack{ 1\le i\le n \\ 1\le j\le n-1}}$.
As for the case $n=3$, we define a linear map
\[
  L_A :~ 
  \va=
  \begin{pmatrix}
    a_1 \\ \vdots \\a_n
  \end{pmatrix}
  \mapsto
    \begin{pmatrix}
    r_1(\va) \\ \vdots \\ r_n(\va)
  \end{pmatrix}
\] 
via 
\begin{align*}
r_1(\va)X^{n-1} +\cdots & + r_n(\va) = \det\bigl( X I^*_n -A \vert -\va)\\
& = \det
\begin{pmatrix}
  X-a_{1,1} & -a_{1,2} & \cdots & -a_{1}\\
  -a_{2,1} & X-a_{2,2} & \cdots & -a_{2}\\
  \vdots &\ddots &  & \\
  -a_{n-1,1} &\cdots & X-a_{n-1,n-1} & -a_{n-1}\\  
  -a_{n,1}&\cdots & -a_{n,n-1} & -a_{n}
  \end{pmatrix}.
\end{align*}

It is convenient to identify the linear map $L_A$ with its
representation matrix with respect to the standard basis of $\Z^n$ and
$(X^{n-1},\ldots,X,1)$.

As usual, given a set $\cS$, we use $ \Mat_{m,n}(\cS)$
to denote the set of all $m \times n$ matrices with entries from the set $\cS$.
We also write  $ \Mat_n(\cS) = \Mat_{n,n}(\cS)$.

Thus $L_A\in \Mat_n(\Z[a_{ij}:~1\le i\le n, \ 1\le j\le n-1])$.  
The coefficients in
the $k$-th row of $L_A$ are the coefficients of
$r_k$ seen as a linear form in $\va$.

For example, $r_1(\va) = -a_n$ and so the first row of $L_A$ is
$(0,\ldots,0,-1)$. 
Moreover, $r_n(\va) = (-1)^n \det (A\vert \va)$ (in particular, the $n$-th row of $L_A$ can be determined by
the Laplace expansion of $\det(A\vert \va)$ by the last column).

The characteristic polynomial of $(A\vert \va)$
equals
\begin{equation}
  \label{eq:charpolyformula}
  \begin{split}
  \det\(X I_n-\left(A|\va\right)\)&\\
  =
r_1(\va)X^{n-1}&+\cdots + r_n(\va) + \det\({X I^*_n-A}|{X \ve_n}\), 
\end{split} 
\end{equation}
where $\ve_n = (0,\ldots,0,1)^t$. 

The determinant of $L_A$ is a polynomial 
\[
Q\in\Z[a_{ij}:~1\le i\le n, \ 1\le j\le n-1]
\] 
in the $n(n-1)$ variables $a_{ij}$, $1 \le i \le n$, $1 \le j \le n-1$.

Now, let $A$ be an $n\times(n-1)$ matrix with entries  in any ring.
Then $Q(A)$ denotes the evaluation of $Q$ at the entries of $A$.
Let $\bar A= (A\vert \va)$ be an $n\times n$-matrix with
entries in a ring, with $A$ an $n\times(n-1)$ matrix and  $\va$  a
column vector of length $n$ as  above. It is useful to
define $Q(\bar A)$ to be $Q(A)$.

\begin{lemma}
  Let $K$ be a field, $f\in K[X]$ monic of degree $n$
  and $A\in \Mat_{n,n-1}(K)$. If $Q(A)\not=0$ then there exists a
  unique $\va\in K^n$ such that $(A\vert \va)$ has characteristic polynomial $f$.
\end{lemma}
\begin{proof}
  This follows from~\eqref{eq:charpolyformula} and since $Q(A)=\det L_A$.
\end{proof}

Next we show that $Q$ does not vanish identically on the set of  matrices of any
prescribed characteristic polynomial. In particular, this  implies that $Q$ is a non-zero polynomial. 

\begin{lemma}
  \label{lem:Qnonvanishing}
  Let $K$ be an algebraically closed field and 
  let $f\in K[X]$ be monic of degree $n$. There exists a matrix in
  $\Mat_n(K)$ with characteristic polynomial $f$ that is not
  in the vanishing locus of
  $Q$.
\end{lemma}
\begin{proof}
  Say $f(X) = (X-\lambda_1)\cdots (X-\lambda_n)$ with $\lambda_i\in K$, $i=1,\ldots,n$. We define
\[
    A=
    \begin{pmatrix}
      \lambda_1 & 0 &  \cdots & & 0 \\
      1& \lambda_2 & 0 & \cdots & 0 \\
      0& 1  & \lambda_3 & 0 &  \\
      \vdots&   & \ddots & \ddots &\vdots   \\        
      0 &\cdots  & & 1 & \lambda_{n-1} & \\
            0 &\cdots  & & 0 & 1 & 
    \end{pmatrix} \in \Mat_{n,n-1}(K).
\]
  Suppose $L_A(\va) =\mathbf 0$  (or $L_A \va =\mathbf 0$ if we recall that we identify $L_A$
  with a matrix) with $\va\in K^n$.
  In other words, $\det (X I^*_n-A|-\va)=0$. We claim that $\va=\mathbf{0}$.
  This claim is immediate when $n=2$ as then $\det( X I^*_2-A|-\va) = -a_2
  (X-\lambda_1)-a_1$.
  Suppose $n\ge 3$. 
  Then $\det\left(\lambda I^*_n -
      A|-\va\right)=0$ for all $\lambda\in K$. For
  $\lambda=\lambda_1$, the Laplace expansion by the last column yields
  $\det\left({\lambda_1 I^*_n -
      A}|{-\va}\right)=\pm a_1$. So $a_1=0$. We  expand by the first
  row and find
  \begin{align*}
   & r_1(\va)X^{n-1}+\cdots + r_n(\va)\\
    & \quad = (X-\lambda_1)\det  \begin{pmatrix}
      X-\lambda_2 & 0 &  \cdots & & 0 & -a_2 \\
      -1& X-\lambda_3 & 0 & \cdots & 0 & -a_3 \\
      \vdots& \ddots  & \ddots &  &\vdots &   \\        
      0 &\cdots  & & -1 & X-\lambda_{n-1} & -a_{n-1}\\
      0 &\cdots  & & 0 & -1 & -a_{n}\\
    \end{pmatrix}.
  \end{align*}
  
  As this expression is identically zero, we conclude $a_2=\cdots=a_n=0$ 
  by a simple induction.

  Thus $L_A$ has a trivial kernel. This means that $Q(A)\not=0$. The result now 
  follows as $(A\vert \va)$ with $\va
  = (0,\ldots,0,\lambda_n)^t$ has characteristic polynomial $f$.
\end{proof}

\begin{lemma}We have:
  \label{lem:LAproperties}
  \begin{enumerate}    
  \item[(i)] for all $k\in \{1,\ldots,n\}$,  each entry in the $k$-th row of
    $L_A$ is 
    homogenous of degree $k-1$ in the variables $a_{ij}$, $i =1, \ldots, n$,  $j = 1, \ldots, n-1$;
  \item[(i)]  the polynomial $Q$ is homogenous of degree $n(n-1)/2$ in the
  variables   $a_{ij}$, $i =1, \ldots, n$,  $j = 1, \ldots, n-1$.
  \end{enumerate}
\end{lemma}

\begin{proof}
  For Part~(i) we recall that 
  \[
  r_1(\va)X^{n-1}+\cdots + r_n(\va) =
  \det\left({X I^*_n -A}|{-\va}\right)
\] 
  with $A=(a_{ij})_{\substack{ 1\le i\le n \\ 1\le j\le n-1}}\in
  \Mat_{n,n-1}(\Z[a_{ij}:~1\le i\le n, \ 1\le j\le n-1])$,
  where   $\va$ is a column vector of
  length $n$. We multiply $X$ and $A$ by a
  new unknown $t$. This scales the right-hand side by $t^{n-1}$. Thus
  $r_k$, which defines the $k$-th row of $L_A$, is homogenous of degree $k-1$ in $a_{ij}$, $1\le i\le n$, $1\le j\le n-1$.

For Part~(ii) we recall that  the polynomial $Q$ is the determinant of $L_A$. 
  By Part~(i), we have that $Q$ is
  homogenous of degree 
  \[\sum_{i=1}^n \deg r_i = \sum_{i=1}^n (i-1) = n(n-1)/2,
\]
  which concludes the proof.
\end{proof}

Let $f(X)=X^n+\ell_1X^{n-1}+\cdots + \ell_n$ with new variables $\ell_1,\ldots,\ell_n$. 

Suppose $A=(a_{ij})_{\substack{ 1\le i\le n \\ 1\le j\le n-1}}$ is the $n\times(n-1)$ matrix whose entries
are 
variables $a_{ij}$ from above.
We consider a column vector $\va$ with coefficients in the fraction
field of $\Z[a_{ij},\ell_k:~ 1\le i,k\le n,\  1\le j\le n-1]$. We consider the equation
\begin{equation}
  \label{eq:LAaeqn}
  L_A \va =
  \begin{pmatrix}
    \ell_1-h_1 \\
    \vdots \\
    \ell_n - h_n
  \end{pmatrix},
\end{equation}
where 
\[
  \det(X I^*_n - A | X \ve_n) = X^n  + h_1 X^{n-1}+\cdots + h_{n-1}X
  + h_n. 
\]
Here each $h_k\in \Z[a_{ij}:~1\le i\le n, \ 1\le j\le n-1]$  is a homogenous polynomial of degree $k$.
The characteristic polynomial of $(A\vert \va)$ equals $f$
by~\eqref{eq:charpolyformula}.

The determinant $Q(A) = \det L_A \in \Z[a_{ij}:~1\le i\le n,\  1\le j\le n-1]$  is non-zero by Lemma~\ref{lem:Qnonvanishing}.
So
\[
  \va = L_A^{-1}   \begin{pmatrix}
    \ell_1-h_1 \\
    \vdots \\
    \ell_n - h_n
  \end{pmatrix} 
\]
is well-defined and provides the unique solution of~\eqref{eq:LAaeqn}. 

Let  $L_A^\#$  denote the adjugate matrix of $L_A$ and
 $\(L_A^{\#}\)_{i,j}$  the entry in row $i$
and column $j$ of $L_A^{\#}$. Up-to sign this entry
equals the determinant of $L_A$ after
deleting column $i$ and row $j$. By Lemma~\ref{lem:LAproperties}(i),
the element $(L_A^{\#})_{i,j}$ of $L_A$ at the position $(i,j)$,   is a
homogenous polynomial in $\Z[a_{ij}:~1\le i\le n, \ 1\le j\le n-1]$ of  degree 
\[
\left(\sum_{k=1}^n (k-1)\right)-(j-1)=n(n-1)/2-j+1.
\]

For $\nu\in \{1,\ldots,n\}$ we
define 
\[
Q_\nu\in \Z[a_{ij},\ell_k:~ 1\le i,k\le n, \ 1\le j\le n-1]
\] 
to be the $\nu$-th entry of
\begin{equation}
  \label{def:Qi}
  L_A^{\#}
  \begin{pmatrix}
    \ell_1-h_1 \\
    \vdots \\
    \ell_n - h_n
  \end{pmatrix}.
\end{equation}
So, with $\vell =\(\ell_1, \ldots, \ell_n\)$ we have 
\begin{equation}
  \label{eq:aiQiQ}
  a_1 = \frac{Q_1 (A, \vell)}{Q(A)},\ldots,a_n = \frac{Q_n (A, \vell)}{Q(A)}
\end{equation}
are the coordinates of the (unique) solution $\va$ of~\eqref{eq:LAaeqn}.
That is, the matrix
\[
  \begin{pmatrix}
    a_{1,1} & \cdots & a_{1,n-1} & \frac{Q_1 (A, \vell)}{Q(A)} \\
    \vdots & & \vdots & \vdots \\
    a_{n,1} & \cdots & a_{n,n-1} & \frac{Q_n (A, \vell)}{Q(A)}\\
  \end{pmatrix}
\]
has the characteristic polynomial equal to $f = X^n + \ell_1X^{n-1}+\cdots
+ \ell_n$.

The degree of $Q_\nu$ with respect to $a_{ij}$, $1\le i\le n$, $1\le j\le n-1$, satisfies
\begin{equation}
  \label{eq:degQibd}
  \begin{split}
  \deg_{(a_{ij})_{i,j}} Q_\nu& \le
  \max_{1\le j \le n} \deg\( (L_A^{\#})_{\nu,j} (\ell_j-h_j)\)\\
  & \le  \max_{1\le j \le n}
\(\frac{n(n-1)}{2}-j+1+j\) = \frac{n(n-1)}{2}+1.  
\end{split}
\end{equation}
The degree of $Q_\nu$ with respect to $(\ell_1,\ldots,\ell_n)$ is at most $1$. 

By computing the trace we find 
\[{Q_n}/{Q} = -\ell_1 - a_{1,1}-\cdots
-a_{n-1,n-1}.
\]
So $Q_n/Q$ is actually a polynomial. But not all
quotients $Q_\nu/Q$ are polynomials by the next  result.

\begin{lemma}
  \label{lem:notpoly}
  Specialize the variables  $\ell_1,\ldots,\ell_n$ to
  elements in $\Q$. There exists $k\in \{1,\ldots,n\}$ such that
  $Q_k/Q\not\in \Q[a_{ij}:~1\le i\le n,\  1\le j\le n-1]$.
\end{lemma}
\begin{proof}
  We suppose that $Q_k/Q = R_k$ is a polynomial in
  $\Q[a_{ij}:~1\le i\le n, \ 1\le j\le n-1]$ for all $k=1,\ldots,n$. 
  Then
  \begin{equation}
    \label{eq:imageA}
    \left( A \left|
      \begin{array}{c}
        R_1(A) \\ \vdots \\ R_n(A)
      \end{array}\right.
    \right)
  \end{equation}
  has characteristic polynomial $f = X^n + \ell_1X^{n-1}+\cdots + \ell_n$
  for all $A\in \Mat_{n,n-1}(\C)$.

  We have $\deg R_k = \deg Q_k - \deg Q\le
  \frac{n(n-1)}{2} + 1 -\frac{n(n-1)}{2}=1$ by Lemma~\ref{lem:LAproperties}(ii)
  and~\eqref{eq:degQibd}.
  Here the degree is understood to be relative to $(a_{ij})_{i,j}$.

  Recall that the
  set of matrices with prescribed characteristic polynomial is an
  absolutely irreducible variety of dimension $n(n-1)$, see Lemma~\ref{lem:IrredDimDeg}.
   The 
  matrices~\eqref{eq:imageA} constitute
  an $n(n-1)$-dimensional affine  linear subvariety of the $n^2$-dimensional affine space
  $\Mat_n$. From this we deduce
  that the set of matrices with prescribed characteristic polynomial
  is affine linear.  This  contradicts Lemma~\ref{lem:IrredDimDeg}. 
\end{proof}

For a  polynomial $Q$ with complex coefficients
(in our case $Q$ is a polynomial in variables $a_{ij}$, $1\le i\le n$, $1\le j\le n-1$), we let $|Q|$ denote
its naive height, that is, 
the maximal absolute value of its coefficients.

\begin{lemma}
  \label{eq:Qinorm}
  Specialize the variables  $\ell_1,\ldots,\ell_n$ to
  elements in $\Q$. We have $|Q|=O(1)$ and $|Q_i| = O(\max\{1,|\ell_1|,\ldots,|\ell_n|\})$, $i=1,\ldots,n$.
\end{lemma}
\begin{proof}
  The matrix $L_A$ is independent of $\ell_1,\ldots,\ell_n$. Therefore, so
  is $Q(A)=\det L_A$. 
Hence, the coefficients of $Q(A)$ depend only on $n$, 
  and in particular are $O(1)$.

By definition, each polynomial $Q_i$ appears in the $i$-th entry of~\eqref{def:Qi}. The polynomials
  $h_1,\ldots,h_n$ and the entries of $L_A^{\#}$
  are independent of $\ell_1,\ldots,\ell_n$. Moreover,
  $Q_i$ is linear in $(\ell_1,\ldots,\ell_n)$. Therefore, $|Q_i| =
  O(\max\{1,|\ell_1|,\ldots,|\ell_n|\})$. 
\end{proof}

\subsection{Concluding the proof} 
We assume that $H\ge 2$ as otherwise there is nothing to prove.

Let $f = X^n+\ell_1X^{n-1}+\cdots + \ell_n\in\Z[X]$ be given.

We keep the notation involving $Q,Q_1,\ldots,Q_n\in
\Z[a_{ij}:~1\le i\le n, \ 1\le j\le n-1]$, where the variables $\ell_1,\ldots,\ell_n$ are
specialized to the coefficients of $f$, in this case integers. There
is nothing to specialize in $Q$, which does not involve the $\ell_i$.

By Lemma~\ref{lem:notpoly} there is  $i_0$ such that $Q_{i_0}/Q$ is
\textit{not} a polynomial in rational coefficients. 
We may write $Q = C \widetilde Q$ and $Q_{i_0} = C \widetilde{Q}_{i_0}$
with $C$ and $\widetilde Q,\widetilde{Q}_{i_0}\in
\Z[a_{ij}:~1\le i\le n, \ 1\le j\le n-1]$ and the latter two coprime. As $Q_{i_0}/Q$ is not
a polynomial (with possibly rational coefficients), we see that
$\widetilde{Q}$ is non-constant and $\widetilde{Q}_{i_0}\not=0$.

Let us select a variable $z$, among the $a_{ij}$, that appears in
$\widetilde Q$. So
\begin{equation}
  \label{eq:Qz}
  \widetilde Q = \sum_{k=0}^{\deg_z \widetilde Q}
  \widehat{Q}_{k} z^{\deg_z\widetilde Q-k}
  \quad\text{with}\quad \deg_z \widetilde Q\ge 1,
\end{equation}
where
$\widehat{Q}_k\in \Z[a_{ij}:~1\le i\le n, \ 1\le j\le n-1]$ is independent of
 $z$ for all $k$ and where $\widehat Q_0\not=0$.

Next we eliminate the variable $z$ from $\widetilde Q$ and
$\widetilde{Q}_{i_0}$. 
Let $R$ be the resultant of $\widetilde{Q}$ and $\widetilde{Q}_{i_0}$ with
respect to $z$. 
 Then $R\in \Z[a_{ij}:~1\le i\le n,\  1\le j\le n-1]$ does not
depend on $z$. Moreover, by~\cite[Corollary~6.20]{GaGe}, since $\widetilde{Q}$ and $\widetilde{Q}_{i_0}$ are coprime  (and thus in particular as polynomials in $z$), $R$ is not zero, and by~\cite[Corollary~6.21]{GaGe}, there exist non-zero $U,V\in \Z[a_{ij}:~1\le i\le n,\ 1\le j\le n-1]$ with $\deg_z U < \deg_z \widetilde{Q}_{i_0}$ and $\deg_z V < \deg_z
\widetilde{Q}$ (with the standard convention that 
 the zero
  polynomial is of degree $-\infty$), such that
  \begin{equation}
  \label{eq:Requation}
 R = U \widetilde{Q} +V \widetilde{Q}_{i_0}.
\end{equation} 

We also have the following estimates:
\begin{lemma}
  \label{lem:deghgtRub}
  We have $\deg R \ll 1$ and $|R| \le \(|f|+1\)^{O(1)}$.
\end{lemma}

\begin{proof}
  By Lemma~\ref{eq:Qinorm} we have $|Q| \ll 1$ and
  $|Q_{i_0}| \ll |f|$.

  The polynomial $\widetilde Q$ is a divisor of $Q$ in
  $\Z[a_{ij}:~1\le i\le n, \ 1\le j\le n-1]$. We can bound $|\widetilde Q|$ from above using
  Gelfond's Lemma. Indeed,~\cite[Lemma~1.6.11]{BomGub} and $\deg Q \ll 1$
  imply $|\widetilde Q| \ll |Q|$. Hence
  $|\widetilde Q| = O(1)$ as $Q$ is independent of $f$.

  The same argument works for $\widetilde Q_{i_0}$, which is a divisor
  of $Q_{i_0}$. Indeed, now we use
  $\deg \widetilde Q_{i_0} \le \deg Q_{i_0}\le n(n-1)/2+1 \ll 1$ and
  $|Q_{i_0}|  \ll |f|$ by~\eqref{eq:degQibd} and
  Lemma~\ref{eq:Qinorm}, respectively. Hence $|\widetilde
  Q_{i_0}| \ll |f|$ by Gelfond's Lemma~\cite[Lemma~1.6.11]{BomGub}.

  Recall that $R$ is the resultant of $\widetilde Q$ and $\widetilde
  Q_{i_0}$ with respect to the variable $z$. The resultant is the
  determinant of the Sylvester matrix of these two polynomials.
  A short calculation yields $\deg R  \ll 1$ (this also follows from~\eqref{eq:Requation}). 
  We bound the norm of this determinant using a
  standard application of the triangle inequality. To bound the norm
  of a product of polynomials we use the upper bound of~\cite[Lemma~1.6.11]{BomGub}, which implies 
  $|R| = O(|f|^{O(1)})$.  
\end{proof}  

We now bound $\#\mathcal{R}_n(H;f)$ from above. We estimate the number of
elements in 
$\mathcal{R}_n(H;f)$ by splitting this set up into three subsets. 
In each case below, we  find at most $H^{n(n-1)-1+o(1)}$ matrices.

\textbf{Case 1:}  The set of matrices $(A\vert \va)\in \mathcal{R}_n(H;f)$ with  $Q(A)=0$.

Let $\mathcal{V}$ be the algebraic set of complex $n\times n$ matrices
with characteristic polynomial $f$.
Recall that $\mathcal{V}$ is an absolutely irreducible variety of
dimension $n(n-1)$, see Lemma~\ref{lem:IrredDimDeg}. 

By Lemma~\ref{lem:Qnonvanishing}, $Q$ does not
vanish identically on the set of algebraic points of $\mathcal{V}$.
Therefore, the intersection of $\mathcal{V}$ with the zero locus of
$Q$ is a possibly reducible variety  of dimension $n(n-1)-1$.
Hence, the
number of matrices $(A\vert \va)$ arising in Case~1 is $O(H^{n(n-1)-1})$, which 
is better than the desired bound.  Indeed, for linear components of  the above variety this is obvious, 
while for absolutely irreducible components of degree at least $2$
we can apply the bound of Pila~\cite[Theorem~A]{Pila} (alternatively, 
we can recall a more powerful~\cite[Lemma~3]{H-B}).

\textbf{Case 2:}  The set of matrices $(A\vert \va)\in \mathcal{R}_n(H;f)$ with  $Q(A)\not=0$ and $R(A)\widehat Q_0(A)=0$. 

Recall that $R\in \Z[a_{ij}:~1\le i\le n,\ 1\le j\le n-1]\smallsetminus\{0\}$ and that
$\deg R \ll 1$, see Lemma~\ref{lem:deghgtRub}. Moreover, $\deg
\widehat Q_0 \le \deg \widetilde Q\le \deg Q = n(n-1)/2$ by
Lemma~\ref{lem:LAproperties}(ii) and $\widehat Q_0\not=0$.
Trivially,  we have
\[\#\{ A\in \Mat_{n,n-1}(\Z;H) :~R(A)\widehat Q_0(A)=0\}  \ll H^{n(n-1)-1}.\]

In the current case we also assume $Q(A)\not=0$. This condition
implies that, given $A$, there is at most
one $\va\in\Z^n$ such that $(A\vert \va)$ has characteristic polynomial $f$.
Therefore, the total number of matrices in $\mathcal{R}_n(H;f)$ that
land in Case~2 is $O(H^{n(n-1)-1})$. As in Case~1, this is better than 
 the desired bound.

\textbf{Case 3:}   The set of matrices   $(A\vert \va)\in \mathcal{R}_n(H;f)$ with \[Q(A)R(A)\widehat Q_0(A)\not=0.\]

As in Case~2, $(A\vert \va)$ is uniquely determined by $A$ and the characteristic
polynomial.

We  have  $Q(A) = C(A)
\widetilde Q (A)$ and $C(A),\widetilde
Q(A)\in\Z\smallsetminus\{0\}$. Moreover, $\widetilde Q_{i_0}(A)\in\Z$.
By the discussion around~\eqref{eq:aiQiQ}, the $i_0$-th entry of the final column of $(A\vert \va)$ equals
$Q_{i_0}(A)/Q(A) = \widetilde Q_{i_0}(A)/\widetilde Q(A)$. This entry
is an integer. Therefore, $\widetilde Q(A)\mid \widetilde
Q_{i_0}(A)$. Now $R(A)\in \Z$ and~\eqref{eq:Requation} implies $\widetilde Q(A) \mid
R(A)$. 

We may assume that there is at least one matrix in $\mathcal{R}_n(H;f)$. Then
$|f|  \ll H^n$. 
So $\deg R  \ll 1$ and $|R|  = H^{O(1)}$ by
Lemma~\ref{lem:deghgtRub}.
The
triangle inequality implies $|R(A)| =H^{O(1)}$. 

We identify $\Mat_{n,n-1}(\Z)$ with $\Z^{n(n-1)}$ by arranging
entries from left-right, top-down. Under this identification let
\[ \pi\colon\Mat_{n,n-1}(\Z)\rightarrow\Z^{n(n-1)-1}\] denote the
projection onto those entries that do not correspond to the
distinguished coefficient $z\in \{a_{ij} :~1\le i\le n,\ 1\le j\le
n-1\}$. Given $A_z\in\Z^{n(n-1)-1}$ it makes sense to evaluate
$R(A_z)$ and all $\widehat Q_k(A_z)$ as $R$ and the $\widehat Q_k$ are
independent of $z$.

If $A_z=\pi(A)$ with $(A\vert \va)$ as in this case, then the discussion
above implies $\widetilde Q(A) \mid R(A_z)$. So the number of possible
divisors $\widetilde Q(A)$ is at most $H^{o(1)}$ for fixed $A_z$, 
see~\eqref{eq:tau}.

After these preliminary remarks, we proceed by counting the matrices in this third case.  Write 
\[ 
N(H) = \#\{ (A\vert \va)
\in \mathcal{R}_n(H;f) :~Q(A)R(A)\widehat Q_0(A)\not=0\}.
\] Then
\begin{equation}
  \label{eq:NHbound}
  N(H) \le \sum_{\substack{A_z \in \Z^{n(n-1)-1} \\ \|A_z\|_\infty \le
      H \\ R(A_z)\widehat Q_0(A_z)\not=0}}
  \#\{(A\vert \va) \in \mathcal{R}_n(H;f):~\pi(A)=A_z\},
\end{equation}
where $\|A_z\|_\infty$ is the largest absolute values of all entries
of $A_z$.

Given $A_z$ as in the  sum we estimate the number of possible
$(A\vert \va)\in\mathcal{R}_n(H;f)$  with $\pi(A)=A_z$. 
There is  $\delta\in \Z\smallsetminus\{0\}$ among one of the $H^{o(1)}$
divisors of $R(A_z)$ with $\widetilde Q(A)=\delta$.
By~\eqref{eq:Qz}, the entry of $A$ missing in $A_z$ is a root of
\[
\sum_{k=0}^{\deg_z \widetilde Q} \widehat Q_k(A_z) z^{\deg_z\widetilde
  Q-k} - \delta
  \in\Z[z].
\]   
As we are counting matrices satisfying  $\widehat Q_0(A)=\widehat Q_0(A_z)\not=0$ this
polynomial has degree $\deg_z \widetilde Q \ge 1$, see~\eqref{eq:Qz}. In particular, it
is not identically zero. Its number of integral roots is at most
$\deg_z \widetilde
Q \le \deg \widetilde Q \le \deg Q  \ll 1$. 

Therefore, for a given $A_z$ and $\delta$, both as above, the number of possible $(A\vert \va)$ with $\pi(A)=A_z$ is $O(1)$. Since the number of possible $\delta\in \Z\smallsetminus\{0\}$ is $H^{o(1)}$, we conclude $N(H) = H^{n(n-1)-1+o(1)}$ from~\eqref{eq:NHbound}.

This completes the third case and concludes  the proof of
Theorem~\ref{thm:gamman}.

  \section{Proof of Theorem~\ref{thm:MaxRank}}
  
 \subsection{Upper bound} 
 \label{sec:Up Bound}
Using the multiplicativity of the determinant  and since we only count multiplicatively depended $s$-tuples of 
non-singular matrices, we can deduce from~\eqref{eq:MultDep}
that
\begin{equation}
\label{eq:Partition}
\prod_{i \in \cI} \(\det A_i\)^{|k_i|} = \prod_{j \in \cJ} \(\det A_j\)^{|k_j|} \ne 0,  
\end{equation}
where the sets $\cI, \cJ \subseteq\{1, \ldots, s\}$ do not intersect.
By the maximality of the rank condition, we also have $\cI \cup \cJ= \{1, \ldots, s\}$ and
also $|k_h| > 0$, $h =1, \ldots, s$. 

We now fix some sets $\cI$ and $\cJ$ as above 
and count $s$-tuples for which~\eqref{eq:Partition} is possible with these sets 
$\cI$ and $\cJ$  and some nonzero exponents $|k_h| > 0$, $h =1, \ldots, s$. 
Let $I = \# \cI$ and $J = \# \cJ$. 

Assume that $J  \le I$. We then fix $J$ matrices $A_j$, $j \in \cJ$, which is possible in at most 
\[
\fA_1 \ll H^{Jn^2}
\]
ways. Let
\[
Q =  \prod_{j \in \cJ} |\det A_j|.
\]
Clearly the determinants $\det A_i$, $i \in \cI$, are factored from the prime divisors of $Q$ and thus, using the trivial bound on their size, we see that 
each of them can take at most $F(Q, n!H^n)$ values.
Trivially, $Q  \ll H^{Jn}$, and therefore  by Lemma~\ref{lem: Sunits} we see that 
$F(Q, n!H^n) = H^{o(1)}$.  For each of these values, we apply Lemma~\ref{lem: Det}, 
and see that each of the matrices $A_i$ can take at most $H^{n^2 - n+o(1)}$ values. 
Hence the total number of choices for the $I$-tuple $(A_i)_{i \in \cI}$ is at most 
\[
\fA_2 = H^{In^2 -In+o(1)}.
\] 

Hence, the total number of $s$-tuples $(A_1, \ldots, A_s) \in \cM_n\(\Z; H\)^s$
satisfying~\eqref{eq:Partition} for at least one choice of the exponents  is at most 
\[
\fA_1 \fA_2 = H^{Jn^2 +In^2 -In+o(1)} = H^{ sn^2  -In+o(1)}. 
\]
Since we have $O(1)$ choices for the sets $\cI$ and $\cJ$ and $I\ge J$ implies $I \ge \rf{s/2}$ 
(because $I+J=s$), we obtain the desired upper bound.

\begin{rem}  The above argument follows the idea of the proof of~\cite[Proposition~1.3]{PSSS} 
(with $n=1$), however the desired upper bound does not follow from the results of~\cite{PSSS}. 
Hence we supply a self-contained proof  and exhibit some 
ideas which are also used in other proofs. 
\end{rem} 
 
   \subsection{Lower bound}

The same argument as  in the proof of the upper bound, 
based on Lemmas~\ref{lem: Det} and~\ref{lem: Sunits}, 
 implies that   for every $j =2, \ldots, s$ there are at most 
$K^{sn^2  -n+o(1)}$  choices of $s$-tuples $(B_1, \ldots, B_s)  \in \cM_n\(\Z; K\)^s$ of non-singular matrices 
such that  all prime divisors of  $\det B_j$   divide  $\det B_1 \ldots \det B_{j-1}$.   
Indeed, when non-singular matrices $B_1, \ldots, B_{j-1}$ are fixed, 
by Lemma~\ref{lem: Sunits},  the determinant 
$\det B_j$  can take at most
\[
F\(|\det B_1 \ldots \det B_{j-1}|,n!K^{n}\)= K^{o(1)}
\] 
values  such that  all prime divisors of  $\det B_j$   divide  $\det B_1 \ldots \det B_{j-1}$, and thus by Lemma~\ref{lem: Det}  there 
at most $K^{n^2  -n+o(1)}$  such  choices for $B_j$.

Hence, discarding the above  $K^{sn^2  -n+o(1)}$ choices of  $(B_1, \ldots B_s)  \in \cM_n\(\Z; K\)^s$, we see that there are still $K^{sn^2  +o(1)}$ choices for  $s$-tuples $(B_1, \ldots, B_s)  \in \cM_n\(\Z; K\)^s$ of non-singular matrices 
such that   for every $j =2, \ldots, s$,  $\det B_j$ contains a prime divisor which 
does not  divide $\det B_1 \ldots \det B_{j-1}$.

Now, if $s=2r$ is even, we set $K = \fl{(H/n)^{1/2}}$. Next, 
for any choice of 
$(B_1, \ldots, B_s)  \in \cM_n\(\Z; K\)^s$ of  non-singular  matrices 
we define
\begin{equation}
\label{eq:Constr even}
A_{2i-1} = B_{2i-1} B_{2i},  \qquad   A_{2i} =   B_{2i+1}B_{2i}  , \qquad i =1, \ldots, r, 
\end{equation}
where we also set $B_{2r+1} = B_{s+1} = B_1$. 
Clearly 
\[
A_1 A_2^{-1} \ldots A_{2r-1} A_{2r}^{-1} = I_n.
\]
Hence the $s$-tuple $(A_1,\ldots,A_s)$ is multiplicatively dependent. 
Moreover, all matrices $A_i$ lie in $\cM_n\(\Z; H\)$. 

Now, we choose matrices $(B_1, \ldots, B_s)  \in \cM_n\(\Z; K\)^s$ such that  for every $j =1, \ldots, s$, the determinant $\det B_j$  has a prime divisor that does not divide  $\det B_1 \ldots \det B_{j-1}$. 
As we have seen, we have  $K^{sn^2  +o(1)}$ choices for  $(B_1, \ldots, B_s)  \in \cM_n\(\Z; K\)^s$. In this case, any  sub-tuple 
$\(A_{i_1}, \ldots, A_{i_t}\)$ of length $t < s$ with $1 \le i_1 < \ldots < i_t \le s$ 
satisfies no relation of the form 
\[
A_{i_1}^{m_1}\ldots  A_{i_t} ^{m_t} = I_n
\]
(where we can assume $m_t\ne 0$) since  the matrix $B_{i_t+1}$,
with a determinant containing a unique prime divisor,  appears   exactly once 
in a non-zero power. Hence the matrices in~\eqref{eq:Constr even} are  multiplicatively dependent 
of maximal rank.

However, it is important to note that for different 
$s$-tuples of matrices $(B_1, \ldots, B_s)  \in \cM_n\(\Z; K\)^s$ the corresponding matrices  in~\eqref{eq:Constr even} 
can coincide. Hence we now need to estimate the possible frequency of repetitions. 
Clearly~\eqref{eq:Constr even} implies
\[
\det A_1= \det B_1 \det B_2.
\]
Since $\det A_1$ and $ \det B_1$ are non-zero integers of size at most $n!H^{n}$
the classical bound on the divisor function~\eqref{eq:tau}, implies that 
for a given $A_1$ we have at most $H^{o(1)}$ values for $ \det B_1$ and thus
 $K^{n^2 - n+o(1)}$ possible values of $B_1$ which for some $B_2,  \ldots, B_s  \in \cM_n\(\Z; K\)$
can lead to the same $s$-tuple  $(A_1, \ldots, A_s)$. Furthermore, when $A_1, \ldots, A_s$
and $B_1$ are fixed, the matrices $B_2,  \ldots, B_s$ are uniquely defined. 
Hence each  $s$-tuple  $(A_1, \ldots, A_s)$ comes from at most $K^{n^2 - n+o(1)}$
different choices of $(B_1, \ldots, B_s)  \in \cM_n\(\Z; K\)^s$, and thus
\begin{equation}
\label{eq:bound even}
 \# \cN_{n,s}^*(H) \ge K^{(s-1)n^2 +n +o(1)} =H^{(s-1)n^2/2 +n/2 + o(1)}
\end{equation}
for an even $s$.

If $s =2r+1$ is odd, we again set $K = \fl{(H/n)^{1/2}}$ and for any choice of 
$(B_1, \ldots, B_{s-1})  \in \cM_n\(\Z; K\)^{s-1}$ of   non-singular  matrices 
we define  
\begin{equation}
\label{eq:Constr odd}
\begin{split} 
A_{2i-1} = B_{2i-2} B_{2i-1}  , \quad A_{2i} &=   B_{2i}B_{2i-1},\quad i =1, \ldots r, \\
A_{2r+1}&=B_{2r},
\end{split}
\end{equation}
where we also set $B_0=I_n$. 
Clearly 
\[
A_1 A_2^{-1} \ldots A_{2r}^{-1} A_{2r+1} = I_n.
\]
Hence the $s$-tuple $(A_1,\ldots,A_s)$ is multiplicatively dependent
and each entry lies in $\cM_n\(\Z; H\)$. 

As above, we choose matrices $(B_1, \ldots, B_{s-1})  \in \cM_n\(\Z;
K\)^s$ such that  for every $j =1, \ldots, s$, the determinant $\det
B_j$  has a prime divisor that does not divide  $\det B_1 \ldots \det
B_{j-1}$. As in the case when $s$ is even, the associated
$s$-tuple $(A_1,\ldots,A_s)$ is multiplicatively dependent of maximal
rank. 
There are $K^{(s-1)n^2  +o(1)}$ possible $(s-1)$-tuples $(B_1, \ldots,
B_{s-1})$, and in this case  all of them generate distinct $s$-tuples $(A_1,\ldots,A_s)$ as in~\eqref{eq:Constr odd}. Therefore, we obtain
\begin{equation}
\label{eq:bound odd}
 \# \cN_{n,s}^*(H) \ge K^{(s-1)n^2  +o(1)} \ge H^{(s-1)n^2/2+o(1)}
\end{equation}
for an odd $s$.  

Putting~\eqref{eq:bound even} and~\eqref{eq:bound odd} together, we conclude the proof.

 \section{Proof of Theorem~\ref{thm:MultDep}}  
 \subsection{Upper bound} We define  the rank  of $(A_1, \ldots, A_s) \in 
 \cM_n\(\Z; H\)$, 
 as the largest integer $r$ with $1\le r\le s$ for which any sub-tuple 
\[
\(A_{i_1}, \ldots, A_{i_{r}}\), \qquad 1 \le i_1 < \ldots < i_{r} \le s,
\] 
is multiplicatively independent.
In particular,  rank $r =s$ means the 
multiplicative independence of  $(A_1, \ldots, A_s)$.   Hence we only count 
$s$-tuples with $r \le s-1$.

By Theorem~\ref{thm:MaxRank},  used with $r+1$ instead of $s$,  we see that  there are at most 
\[
H^{ (r+1) n^2  - \rf{(r+1)/2} n+o(1)}  H^{(s-r-1)n^2} = 
H^{ sn^2  - \rf{(r+1)/2} n+o(1)}  \le   H^{ sn^2  - 2 n+o(1)} 
\] 
choices 
of $(A_1, \ldots, A_s) \in  \cM_n\(\Z; H\)$ of rank $r $ with $n-1 \ge r\ge 2$.  

We now count    $s$-tuples $(A_1, \ldots, A_s) \in \cM_n\(\Z; H\)$ of rank $r=0$ and $r=1$.

Clearly for $r=0$, there is a matrix $A\in \{A_1, \ldots, A_s\}$ and an integer $k\ne 0$ with 
$A^k = I_n$. The same argument as  in the proof of  Lemma~\ref{lem: md pairs} implies that 
all eigenvalues of $A$ are roots of unity. Since they are of degree at most $n$ over $\Q$, there are
at most $O(1)$ possibilities for them and thus for the characteristic polynomial of $A$. 
Invoking  the definition of $\gamma_n$   in~\eqref{eq:CharPoly}, we obtain at most  
$H^{n^2-n -\gamma_n +o(1)}$ 
possible choices for $A \in \cM_n\(\Z; H\)$ thus  at most 
\[
H^{ (s-1)n^2 + n^2  - n -\gamma_n+o(1)} = H^{ sn^2  - n -\gamma_n+o(1)}
\] 
choices 
for  $(A_1, \ldots, A_s) \in  \cM_n\(\Z; H\)$.

For $r=1$, there are two distinct multiplicatively dependent matrices $A,B\in \{A_1, \ldots, A_s\}$ 
for which, by   Lemma~\ref{lem: md pairs}, there are at most $H^{2n^2-n -\gamma_n +o(1)}$ choices and 
thus    at most 
\[
 H^{ (s-2)n^2 + 2n^2  - n -\gamma_n+o(1)} = H^{ sn^2  - n -\gamma_n+o(1)}
\] 
choices 
for  $(A_1, \ldots, A_s) \in  \cM_n\(\Z; H\)$.

Putting everything together, we conclude the proof.

\subsection{Lower bound}  
We recall that $\widetilde R_{n} (H;f)$ is a natural analogue of 
$R_{n} (H;f)$, where  the matrices are ordered 
by the $L_2$-norm rather than by the $L_\infty$-norm.  In particular 
$R_{n} (H;f) \ge \widetilde R_{n} (H;f)$. 

Furthermore, let $m = \varphi(k)$ and  let $\Phi_k(X)$ be the $k$th cyclotomic polynomial.
Since $\Phi_k$ is a monic irreducible polynomial of degree $m$ over $\Z$, 
by~\eqref{eq:Asymp}  we see that for a sufficiently large $H$, there are 
\begin{equation}
\label{eq: low bound}
\begin{split} 
 R_{m} (H;\Phi_k) & \ge \widetilde R_{m} (H;\Phi_k) \\
 & = \(C\(\Phi_k\) + o(1)\) H^{m(m-1)/2}
 \gg H^{m(m-1)/2}
 \end{split} 
 \end{equation}
matrices $B \in  \cM_m\(\Z; H\)$ for which $\Phi_k(X)$ is the characteristic 
polynomial. Since $\Phi_k(X)\mid X^k-1$, we conclude that 
\[
B^k = I_m.
\]

Next, let $n = m_1+\ldots + m_h$ be a representation of $n$ as a sum of totients
$m_i = \varphi(k_i)$, $i =1,\ldots, h$. 

We now consider matrices $A$ of the form
\begin{equation}
\label{eq: MatrRootofUnity}
A = \begin{pmatrix} B_1 &  &\mathbf{0}\\
\mathbf{0}&\ddots & \mathbf{0} \\
 \mathbf{0}&  & B_h
\end{pmatrix} , 
 \end{equation}
with diagonal cells formed by matrices $B_i \in  \cM_{m_i} \(\Z; H\)$ with $B_i^{k_i} = I_{m_i}$ and zeros everywhere else. 
Clearly for $k = k_1 \ldots  k_h$ we have
\[
A^k =  \begin{pmatrix} B_1^k &  &\mathbf{0}\\
\mathbf{0}&\ddots & \mathbf{0} \\
 \mathbf{0}&  & B_h^k
\end{pmatrix} 
=   \begin{pmatrix} \(B_1^{k_1}\)^{k/k_1}  &  &\mathbf{0}\\
\mathbf{0}&\ddots & \mathbf{0} \\
 \mathbf{0}&  & \(B_h^{k_h}\)^{k/k_h} 
\end{pmatrix} = I_n.
\] 
We also see from~\eqref{eq: low bound},  using matrices $B_i$ whose characteristic polynomial
is $\Phi_{k_i}$, $i =1, \ldots, h$, that there are at least  
\begin{equation}
\label{eq: CountMatrRootofUnity}
\begin{split} 
\prod_{i=1}^h & \(C\(\Phi_{k_i} \)+ o(1)\) H^{m_i(m_i-1)/2}\\
& \qquad \qquad \gg  \prod_{i=1}^h  H^{m_i^2/2-m_i/2} =
  H^{-n/2} \prod_{i=1}^h  H^{m_i^2/2}
 \end{split} 
 \end{equation}
such matrices $A$.  

Hence, choosing $A_1$ to be one of the above   matrices and 
choosing  arbitrary  non-singular matrices 
 $A_2, \ldots, A_n \in  \cM_n\(\Z; H\)$, 
we obtain 
 \[
  \# \cN_{n,s}(H)  \gg H^{(s-1)n^2-n/2}   \prod_{i=1}^h  H^{m_i^2/2} 
\]
which concludes the proof.

 \section{Proof of Theorem~\ref{thm:sln}} 
The proof of the upper bound follows line by line that of Lemma~\ref{lem: md pairs}, with the only modification that, since $\det B=1$, the number of matrices $B$ therein is $H^{n^2-n+o(1)}$ by Lemma~\ref{lem: Det} (in fact $O(H^{n^2-n})$ by~\cite[Example~1.6]{DRS}). 

For the lower bound,  we take $A_1$  as in~\eqref{eq:
  MatrRootofUnity}, so 
\[
  A_1 = \begin{pmatrix} 
    B_1 &  &\mathbf{0}\\
\mathbf{0}&\ddots & \mathbf{0} \\
 \mathbf{0}&  &  B_h 
\end{pmatrix}, 
\]
where without loss of generality 
we can assume that $k_i\not= 2$, $i=1, \ldots, h$ (since $\varphi(2) = \varphi(1)$ this does not change the 
definition of $w(n)$). Then, for the corresponding cyclotomic polynomials we have 
$\Phi_{k} (0) = 1$ for all $k\ge 2$ and $\varphi(k)$ is even for all $k\ge 3$. Thus the matrices $B_i$, $i=1,\ldots,h$, in~\eqref{eq: MatrRootofUnity}  belong to $\SL_n(\Z)$. Therefore,  $A_1 \in \SL_n(\Z)$ and the number of choices for $A_1$ is still 
given by~\eqref{eq: CountMatrRootofUnity}.  After this,  we can choose   an arbitrary 
 matrix $A_2 \in \SL_n(\Z)\cap \cM_n(\Z;H)$
and note that by~\cite[Example~1.6]{DRS} for each of them we have $(c_n+ o(1))H^{n^2-n}$ choices, with some constant $c_n> 0$, 
which depends only on $n$. 
The lower bound now follows.

  \section{Proof of Theorem~\ref{thm:Kernel}}

  Assume that   $(A_1, \ldots, A_s) \in \cK_{n,s}(H)$. 
  From the definition of a nontrivial word in the kernel, taking determinants, 
 we obtain a relation  
\[
\prod_{i=1}^s\(\det A_i\)^{k_i} = 1, 
\]
with some integers $k_1, \ldots, k_s$,  not all equal zero.
Without loss of generality we can assume 
that $k_1\ne 0$. Then all prime divisors of $\det A_1$ are among those of the product 
\[
Q = \prod_{i=2}^s \det A_i;
\]
with the convention that $Q=1$ if $s=1$. 
Since, trivially, $|\det A_i| \le n!H^n$, $i=1, \ldots, s$, by Lemma~\ref{lem: Sunits},
we see that $\det A_1$  can take at most $H^{o(1)}$ distinct values. 
Now an application of Lemma~\ref{lem: Det} concludes the proof.

\section{Proof of Theorem~\ref{thm:BD gen}}   

\subsection{Preliminary discussion}
Without loss of generality we may assume $s\ge 2$.

There are obviously $O\(H^n\)$ matrices  $A\in \cM_{n} \(\Z; H\)$ 
which do not satisfy the conditions of Lemma~\ref{lem: Comm Matr}
(that is, such that each row and column  has at most one non-zero element). 
By  Lemma~\ref{lem: Det} there are at most 
$ H^{n^2 - n +o(1)}$ matrices $A\in \cM_{n} \(\Z; H\)$  with 
  $\det A_j = \pm 1$. Let $\cE$ be the set of such exceptional matrices 
  of both kinds. For $n \ge 2$,  we have 
 \[
 \# \cE  \ll H^n + H^{n^2 - n +o(1)} = H^{n^2 - n +o(1)}.
 \]
 Hence there are at most 
 \[
 H^{(n^2 - n +o(1))s} \le H^{sn^2 -  (s-1)n/3 + o(1)}
\]
 of $s$-tuples $(A_1, \ldots, A_s) \in \cE^s$. 

Hence, we can now consider the case where at least one matrix   $A_h \not \in   \cE$ among 
 $(A_1, \ldots, A_s)$  and thus $A_h$ satisfies the conditions of Lemma~\ref{lem: Comm Matr} and  also $\det A_h \ne \pm 1$. We fix such a matrix $A_h\not\in\cE$ for some $h\in\{1,\ldots,s\}$ in $O\(H^{n^2}\)$ ways, and we now count $(s-1)$-tuples $(A_1, \ldots,A_{h-1}, A_{h+1}, \ldots, A_s) \in \cM_{n}(\Z;H)^{s-1}$
  of non-singular matrices,  which boundedly generate a  subgroup of $\GL_n(\Q)$. 

\subsection{Initial step}  
 We also  define the set 
 \[
 \cL_1 =  \{1, \ldots, s\}\setminus \{h\}.
 \]
 
 Choose the smallest positive integer $\ell_1 \ne h$ 
 (that is, $\ell_1=2$ if $h =1$ and $\ell_1= 1$ if $h \ge 2$). If $\ell_1< h$ we consider the products $A_h A_{\ell_1}$ 
 and the product $A_{\ell_1} A_h$ otherwise. 
 
 Assume first $\ell_1 < h$. 
 If $ \langle A_1\rangle \ldots  \langle A_s\rangle \le \GL_n(\Q)$ then we have
\[
A_h A_{\ell_1} = A_1^{k_1}\ldots A_s^{k_s}
\]
with some $k_1, \ldots, k_s \in \Z$. Discarding vanishing exponents, we write 
\begin{equation}
\label{eq:Set  I1}
A_h A_{\ell_1} = \prod_{i\in \cI_1} A_i^{k_i}
\end{equation}
with integers $k_i \ne 0$  
and some set $\cI_1 \subseteq  \{1, \ldots, n\}$. Note that we do not rule out the case $\cI_1 = \emptyset$, 
that is, $A_h A_{\ell_1}  = I_n$, in which case $A_{\ell_1} = A_h^{-1}$ is uniquely defined. 
We also set 
\[\cJ_1 = \(\cI_1\cup  \{\ell_1\}\)   \setminus \{h\}  .
\]

If the right hand side  of the equation~\eqref{eq:Set  I1} is   of the form $A_{\ell_1} A_h$, then by Lemma~\ref{lem: Comm Matr}
we have at most $\#  \cC_n(A_h, H)  \ll H^{n^2-n}$ choices for $A_{\ell_1}$.  We  also have $\cJ_1 = \{\ell_1\}$ in this case. 

Otherwise, taking determinants we obtain 
\begin{equation}
\label{eq:Det Prod}
\det A_h  \det A_{\ell_1} =  \prod_{i\in \cI_1}\( \det A_i\)^{k_i}.
\end{equation}
Since $A_h \in \cM_{n}(\Z;H)$ is fixed, and the right hand side is not of the form $A_{\ell_1} A_h$ 
we obtain a nontrivial multiplicative relation among the determinants. 

We now use a modification of the argument of Section~\ref{sec:Up Bound}.

If $\cI_1 \subseteq \{h, \ell_1\}$ but $(k_h,k_{\ell_1}) \ne (1,1)$ then Lemma~\ref{lem: Det} 
implies that there are $H^{n^2 - n +o(1)}$ choices for $A_{\ell_1}$ (and as before 
we have  $\cJ_1 = \{\ell_1\}$). Indeed, since $A_h\not\in\cE$, we have  $\det A_h\ne \pm 1$, 
and thus the choice $k_h \ne 1$ and $k_{\ell_1} = 1$ is impossible.  So, we obtain 
\[
\(\det A_h\)^{k_h-1} = \(\det A_{\ell_1}\)^{1-k_{\ell_1}} 
\]
and thus all prime divisors of $\det A_{\ell_1}$ are among the  prime divisors of $\det A_h$. 
Therefore, by Lemma~\ref{lem: Sunits} there are $H^{o(1)}$ possibilities for  $\det A_{\ell_1}$
and Lemma~\ref{lem: Det}  yields the desired claim.

So we can now assume that $\cI_1$ contains some element which is different 
from $h$ and $\ell_1$. In particular in this case  for $J_1 = \# \cJ_1$ we have
$J_1 \ge 2$ and thus we have 
\[
\rf{(J_1-1)/2} \ge J_1/3.
\]

Hence as in Section~\ref{sec:Up Bound}, using Lemmas~\ref{lem: Det} 
and~\ref{lem: Sunits}, we see that there are at most 
\[
H^{n^2 J_1 - \rf{(J_1-1)/2} n +o(1)}  \le H^{n^2 J_1 - nJ_1/3 +o(1)}  
\] 
choices for $(A_j)_{j \in \cJ_1}$.  We note that the saving in the exponent is  
only $\rf{(J_1-1)/2} n$ rather than  $\rf{J_1/2} n$  since it is possible that $\ell_1 \in \cI_1$ 
and $k_{\ell_1} = 1$, in which case the relation~\eqref{eq:Det Prod} does not impose 
any restrictions on $A_{\ell_1}$.  

A similar argument also applies for $\ell_1 > h$ with a similarly defined set $\cJ_1$. 
 Hence in either case we obtain the bound $ H^{n^2 J_1 - nJ_1/3 +o(1)}$ on the 
number of choices for $(A_j)_{j \in \cJ_1}$.

\subsection{Recursive steps}  

We now define 
\[
\cL_2 = \cL_1 \setminus \cJ_1.
\]

If $\cL_2 = \emptyset$ we stop, otherwise we chose the smallest integer $\ell_2\in \cL_2$ 
and consider one of  the products $A_h A_{\ell_2}$  or $A_{\ell_2} A_h$, depending on whether 
$\ell_2 < h$ or $\ell_2 > h$. Again we consider 
\[
A_h A_{\ell_2} = \prod_{i\in \cI_2} A_i^{k_i}
\qquad \text{or} \qquad 
A_{\ell_2}  A_h = \prod_{i\in \cI_2} A_i^{k_i}, 
\]
with integers $k_i \ne 0$ and some set $\cI_2 \subseteq  \{1, \ldots, n\}$.

We now define
\[
\cJ_2 =  \(\cI_2  \cup  \{\ell_2\}\) \setminus \( \cJ_1  \cup \{h\}\), 
\]
that is, for all
  $j\in\cJ_2$ the matrix $A_j$ does not appear in~\eqref{eq:Set I1}. 
Arguing as before we see that 
there are at most 
\[
H^{n^2 J_2 - \rf{J_2/2} n +o(1)} \le H^{n^2 J_2 - nJ_2/3 +o(1)} 
\] choices for $(A_j)_{j \in \cJ_2} $
where $J_2 = \# \cJ_2$. 

Continuing this procedure and generating new relation and sets of newly participating matrices,  we eventually arrive to the case when
\[
\cL_{r+1} = \cL_r \setminus \cJ_r = \emptyset.
\]  
Hence non-overlapping sets $\cJ_1, \ldots, \cJ_r$ exhaust the whole set $\cL_1$ and thus 
for their cardinalities $J_1, \ldots, J_r$ we have 
\begin{equation}
\label{eq:Sum Jt}
J_1 + \ldots + J_r = s-1.
\end{equation}

Since there are $O\(H^{n^2}\)$ choices for $A_h$,  we see from~\eqref{eq:Sum Jt}   that the total number of choices 
for the $s$-tuple $(A_1, \ldots, A_s) \in \cM_{n}(\Z;H)$ is
\[
H^{n^2} \prod_{t=1}^r H^{n^2 J_t  - nJ_t/3 +o(1)}
 \le H^{sn^2 -   (s-1)n/3+o(1)}, 
 \]
 which concludes the proof. 
 
 \section{Comments and open questions}
\label{sec:com} 

It is easy to see that some of our arguments 
 also apply to the counting 
function of $s$-tuples $(A_1, \ldots, A_s) \in \cM_{n}(\Z;H)$ such that 
\[
A_1^{k_1}\ldots A_s^{k_s} = A_1^{m_1}\ldots A_s^{m_s} ,
\]
with two distinct integer vectors 
\[
\(k_1,\ldots, k_s\) \ne \(m_1,\ldots , m_s\).
\]
This is another natural question which deserves a separate study.

We do not believe that  any of the upper and lower bounds in Section~\ref{sec:md matr}
are  tight except for the lower bound of  Theorem~\ref{thm:MultDep}, which could be 
close to the truth. That is, we expect that most of the $s$-tuples
$(A_1, \ldots, A_s) \in \cN_{n,s}(H)$ consist of one matrix, say $A_1$, with $A_1^k = I_n$
and arbitrarily chosen other matrices $A_2, \ldots, A_n \in \cM_n\(\Z; H\)$.  

The problem of counting matrices with a given characteristic polynomial has a very 
natural dual question: counting the numbers $P_n(H)$ and $I_n(H)$ of
distinct characteristic polynomials 
and irreducible characteristic polynomials, 
generated by  matrices $A \in \cM_n\(\Z; H\)$. We note that the methods and results 
of~\cite{ALPS, MeSh} lead to some lower bounds on  $P_n(H)$ and $I_n(H)$, but they are 
not expected to be tight. 

It is also very important to find a different approach than appealing to the determinant argument, taking also into account other coefficients of the characteristic polynomial. Such  an argument 
can open a path to extending our results for   $\SL_n(\Z)$ matrices, 
when the determinant argument provides no useful information. 
However, in the case  of $\SL_2(\Z)$ matrices an essentially optimal result is 
given in~\cite{BOS}.

Symmetric and other special matrices are of interest too. 
For symmetric matrices, a version of Lemma~\ref{lem: Det} is given 
in~\cite[Theorem~5]{Shp} but the result is much poorer than in the case of arbitrary matrices.
It is possible that  some ideas of Eskin and  Katznelson~\cite{EskKat} can be adapted to this case.

The bound of Lemma~\ref{lem: Comm Matr}  also motivates  dual questions of estimating the cardinalities
of the set of commuting matrices
\[
\cC_n(H) = \{(A,B)  \in \cM_n\(\Z;H\)^2:~AB = BA\},
\]
and also of the set of commutators of bounded size, namely of the set
\begin{align*} 
\cZ_n(H) = \{C \in \cM_n(\Z;H):~C& = AB A^{-1} B^{-1}\\
& \text{for some  non-singular}\ A,B  \in \cM_n(\Z)\}.
\end{align*}

Clearly $\cZ_n(H) \subseteq \{C \in \cM_n(\Z;H):~\det C = 1\}$, so by~\cite{DRS} we have
$\# \cZ_n(H)\ll H^{n^2-n}$.  On the other hand, we note  that it is quite possible that 
\[
 \{AB A^{-1} B^{-1}:~\text{for some non-singular}\ A,B  \in \cM_n(\Z)\} \supseteq \SL_n(\Z),
 \]
 for $n \ge 3$, see~\cite[Problem~8.13]{Shal}. 
   We also note that  a classical result of Feit and Fine~\cite{FeFi}
on counting commuting matrices over finite fields, applied with a  prime $p$ 
satisfying 
$2H  <  p \ll H$, 
 instantly implies that  $\#  \cC_n(H)  \ll H^{n^2+n}$, but we seek better bounds.

By a result  of Loxton and van der Poorten~\cite[Theorem~3]{LvdP}
(see also~\cite{LoMa,Matveev}), which actually plays an important part in the argument 
of~\cite{PSSS}, if  non-zero integers $a_1, \ldots, a_s\in [-H,H]$ are multiplicatively dependent 
then one can choose reasonably small exponents in the corresponding relation.  
Namely there 
are $k_1, \ldots, k_s \in \Z$ not all zero such that $k_1, \ldots,
k_s \ll (\log H)^{s-1}$,   and 
\[
a_1^{k_1} \ldots a_s^{k_s} = 1.
\]
The following example, communicated to the authors by David Masser, shows that even 
for $2\times 2$ matrices nothing of this kind can be true. Take an
integer $H\ge 1$ and set
\begin{equation}
\label{eq:Bad Exp}
A_1 = \begin{pmatrix} 1 & H-1\\
0& 1\end{pmatrix}\mand A_2 = \begin{pmatrix} 1 & H\\
0& 1\end{pmatrix}.
\end{equation}
Then we have
\[A_1^k A_2^\ell =   \begin{pmatrix} 1 & (H-1)k + \ell H\\
0& 1\end{pmatrix}.
\] 
Hence 
\[A_1^k A_2^\ell = I_2
\]
with $(k,l)\not=(0,0)$ implies $|k|\ge H$. 
Thus for $n \ge 2$ no logarithmic bound is possible. However we deal with a finite set of matrices from $ \cM_n(\Z;H)$, so there is certainly some bound on the smallest possible exponents, which depends only on $H$ and which is very interesting to find. 

Finally, we note that any further progress towards Conjecture~\ref{conj:CharPoly} beyond our Theorems~\ref{thm:gamma23} and~\ref{thm:gamman}    is of independent interest.

\appendix

\section{Dimension and degree of the projective variety of matrices with a given characteristic polynomial}
\label{app: deg Vf}

\subsection{Formulations of main results}   
In this appendix we show that the  degree of the  variety  of $n\times
n$-matrices
with prescribed integral characteristic polynomial   is $n!$. It is perhaps
well-known, but we have not been able to find any 
precise reference.
This computation is not used
 in our main results, but note Remark~\ref{rem:dim growth}. 

Let $K$ denote an algebraically closed field of characteristic $0$.
Let $n\ge 1$ be an integer and let as usual $I_n$ denote the $n\times n$ identity matrix.
We consider $A$ as an $n\times n$ matrix with independent entries  
 $A_{ij}$.
The characteristic polynomial $\det(XI_n - A)$ can be written as
\[
\det(XI_n - A)=X^n +a_1(A) X^{n-1} + \cdots + a_n(A),
\]  where
$a_k \in \Z[(A_{ij})_{1\le i,j\le n}]$ is homogeneous of
degree $k$
for each $k\in \{1,\ldots,n\}$.
We fix a further variable $Z$.

Let $\ell_1,\ldots,\ell_n\in  K$. Then
\begin{equation}
  \label{eq:system}
  a_1(A) - \ell_1 Z, a_2(A) - \ell_2 Z^2, \ldots, a_n(A)-\ell_n Z^n
\end{equation}
are $n$ homogenous polynomials of degree $1,2,\ldots,n$, respectively, in
the $n^2+1$ variables coming from the entries of $A$ and $Z$.

We consider the entries of $A$ together with $Z$ as projective
coordinates on $\P^{n^2}_K$. We identify affine $n^2$-space
$\A^{n^2}_K$ as the Zariski open subset  of $\P^{n^2}_K$ given by $Z\not=0$.

By basic algebraic geometry, the subscheme $\cV_{\ell_1,\ldots,\ell_n}\subset
\P_K^{n^2}$ cut out by~\eqref{eq:system} is non-empty. Moreover,
each irreducible component of $\cV_{\ell_1,\ldots,\ell_n}$ has dimension at least $
n^2-n$.   We consider
$\cV_{\ell_1,\ldots,\ell_n}$  as a scheme over
$\mathrm{Spec}(K)$. 

\begin{prop}
\label{prop:irred scheme}
   If $\ell_1,\ldots,\ell_n\in\Z$, the scheme $\cV_{\ell_1,\ldots,\ell_n}$ is irreducible of dimension $n^2-n$.
\end{prop}
\begin{proof}
  When $\ell_1,\ldots,\ell_n$ are integers
  and $K$ has characteristic $0$, then by Lemma~\ref{lem:IrredDimDeg}
  the affine part
  $\cV_{\ell_1,\ldots,\ell_n}\cap \A^{n^2}_K$ is irreducible of dimension
  $n^2-n$. The hyperplane at infinity,  that is, $Z=0$,
  corresponds via~\eqref{eq:system} to nilpotent matrices.
  By Lemma~\ref{lem:IrredDimDeg} it also has  dimension $n^2-n$ as
  an affine variety.
  But it is a cone above $\cV_{\ell_1,\ldots,\ell_n}\cap
  (\P^{n^2}_K\setminus\A^{n^2}_K)$, which thus
  has dimension $n^2-n-1$. It follows that there can
  be no additional irreducible components of $\cV_{\ell_1,\ldots,\ell_n}$ on
  $Z=0$. 
\end{proof}

\begin{rem}
We note that Lemma~\ref{lem:IrredDimDeg}, and thus
Proposition~\ref{prop:irred scheme}, can be  extended to hold for
$\ell_1,\ldots,\ell_n\in\overline{\Q}$.  
\end{rem}

As  $\cV_{\ell_1,\ldots,\ell_n}$ is cut out by polynomials of degree
$1,2,\ldots,n$,  B\'ezout's Theorem implies that the degree is at most
$n!$. In the proposition below we establish equality, its proof is
carried out in this section.

\begin{prop}
  \label{prop:degreenfac}
   If $\ell_1,\ldots,\ell_n\in\Z$, the projective subvariety $\cV_{\ell_1,\ldots,\ell_n}\subset\P_K^{n^2}$ has
  degree $n!$.
\end{prop}

We establish Proposition~\ref{prop:degreenfac} in Section~\ref{sec:Dim Vn}

\subsection{Preliminaries} 
We call a matrix in $\Mat_n(K)$ Jordan regular if the Jordan blocks
in its Jordan normal form 
have pairwise distinct eigenvalues. For example, a matrix whose
characteristic polynomial is squarefree is Jordan regular.

\begin{lemma}
\label{lem:lin indep}
  Let $B\in \Mat_n(K)$ be Jordan regular. Then
  \begin{equation*}
    \left(\frac{\partial a_1 }{\partial A_{ij}}(B)\right)_{1\le i,j\le
      n},
    \ldots,     \left(\frac{\partial a_n }{\partial A_{ij}}(B)\right)_{1\le i,j\le n}
  \end{equation*}
  are linearly independent elements in $\Mat_n(K)$. 
\end{lemma}
\begin{proof}
  We use  Jacobi's formula to formally compute the derivative of $\det(X I_n
  -A)$ by $A_{ij}$. Indeed, we obtain
  \begin{equation*}
    \frac{\partial}{\partial A_{ij}} \det (X I_n-A) = - \mathrm{Tr}((X
    I_n-A)^{\#} E_{ij}), 
  \end{equation*}
  where $E_{ij}$ is zero except for a $1$ in row $i$, column $j$
  and 
 $B^\#$ denotes the adjugate matrix of  an $n \times n$ matrix $B$,  that is, 
\[
B B^\#= \det B \cdot I_{n}\,.
\] 

  The trace above is the $j$-th row and $i$-th column entry of
  $(XI_n-A)^{\#}$.  
  We get
  \begin{equation*}
   \left(\frac{\partial}{\partial A_{ij}} \det (X I_n-A)\right)_{1\le i,j\le n}
    = -((X I_n-A)^{\#})^t = -(XI_n - A^t)^{\#}.
  \end{equation*} 
  The entries on the left-hand side are the derivatives of the characteristic
  polynomial by the $A_{ij}$. We find
  \begin{equation}
    \label{eq:derivcharpoly}
    \sum_{k=1}^n X^{n-k}
    \left(\frac{\partial a_k(A)}{\partial A_{ij}} \right)_{1\le i,j\le n}
    = \det(XI_n-A^t)\cdot (A^t-XI_n)^{-1}. 
  \end{equation} 

  Suppose that the $n$ matrices in the lemma are not linearly
  independent. So they all lie in a vector subspace
  $V\subset\Mat_{n}(K)$ with $\dim V<n$.
  Moreover, any $K$-linear combination lies in $V$.
  We substitute $B$ in~\eqref{eq:derivcharpoly}. The left-hand side
  then lies in $V$ if we replace $X$ by any element of $K$.
  Thus 
  $(B^t- zI_n)^{-1} \in V$ for all $z\in K$ that are not
  eigenvalues of $B^t$.
  
  After doing a change of basis (and replacing $V$ by a subspace of
  the same dimension) we may assume that $B^t$ is in Jordan normal
  form with Jordan blocks $J(\lambda_1,n_1),\ldots,J(\lambda_m,n_m)$.
  As $B$ is similar to $B^t$ and as $B$ is Jordan regular by hypothesis,
  the eigenvalues $\lambda_1,\ldots,\lambda_m$
  are pairwise distinct. Thus
  \begin{equation*}
    \left(
      \begin{array}{lll}
        J(\lambda_1-z,n_1)^{-1}& & \\
                           & \ddots & \\
& &         J(\lambda_m-z,n_m)^{-1}\\
      \end{array}\right)\in V
  \end{equation*}
  for all $z\in K \setminus\{\lambda_1,\ldots,\lambda_m\}$.

  Let $\pi\colon \Mat_n(K)\rightarrow K^{n_1}\times \cdots\times
  K^{n_m}= K^n$ be the product of the projections  to
  the first row of
  each Jordan block $J(\cdot,n_1),\ldots,J(\cdot,n_m)$. The image of
  $V$ is a proper subspace of $ K^n$ as $\dim V<n$. Thus
  there are column vectors $\alpha_k\in K^{n_k}$, not all zero,
  with
  \begin{equation*}
    \sum_{k=1}^m \(J(\lambda_k-z,n_k)^{-1}\)_{1} \alpha_k =0
  \end{equation*}
  for all but finitely many $z\in K$;
  here the subscript $1$ means taking the first row.

  Let us be more concrete by using the formula
  \[
  (J(X,n_k)^{-1})_{1} = \(1/X,-1/X^2,1/X^3,\ldots,(-1)^{n_k-1} /X^{n_k}\). 
\]
  If we write $\alpha_k=(\alpha_{k,1},\ldots,\alpha_{k,n_k})$ then our
  relation becomes
  \begin{equation*}
    \sum_{k=1}^m \sum_{l=1}^{n_k}   \frac{\alpha_{k,l}}{(z-\lambda_k)^l}=0.
  \end{equation*}
  This equality holds for all but finitely many $z\in K$. We get a
  contradiction as $\lambda_1,\ldots,\lambda_m$ are pairwise distinct
  and as  $\alpha_1,\ldots,\alpha_m$ are not all zero.
\end{proof}

\subsection{Proof of Proposition~\ref{prop:degreenfac}}
\label{sec:Dim Vn}


 Let us
abbreviate $\cV = \cV_{\ell_1,\ldots,\ell_n}$.

By basic facts from linear algebra there is a Jordan regular matrix
$B$ with characteristic polynomial $X^n +\ell_1 X^{n-1}+\cdots +
\ell_n$. 
The $K$-points of $\cV\cap \A_K^{n^2}$ correspond to
matrices in $\Mat_n(K)$ whose characteristic polynomial is
$X^n + \ell_1 X^{n-1}+\cdots + \ell_n$. 
Thus $B\in (\cV\cap\A_K^{n^2})(K)$.

The Jacobian criterion applied to the dehomogenized system
\[
  a_1(A) - \ell_1 , a_2(A) - \ell_2 , \ldots, a_n(A)-\ell_n,
\]
together with Lemma~\ref{lem:lin indep}, 
shows that $B$ satisfies the Jacobian condition for smoothness. 
By~\cite[Definition~6.14(1)]{GoertzWedhorn}, $\cV\cap
\A_K^{n^2}$
is smooth over $K$ of relative dimension $n^2-n$ 
at $B$.

In particular,
$\cV\cap\A_K^{n^2}$ is smooth of relative dimension
$n^2-n$ over $K$ at all Jordan regular matrices in $\Mat_n(K)$.
By~\cite[Lemma~6.26]{GoertzWedhorn}, the
 local ring of  $\cV$ at Jordan regular matrices is
 regular and has
dimension $n^2-n$. 
In particular, $\cV$ is reduced at all Jordan regular
matrices.

Above we have seen that $\cV$ admits at least one
regular (closed) point. The regular locus is Zariski open by a general
fact in this setting, see~\cite[Corollary~12.52(2)]{GoertzWedhorn}. So the local ring
of $\cV$ at the generic point is regular. The local
ring at the generic point has dimension $0$, 
see~\cite[Equation~(5.8.1)]{GoertzWedhorn}, and a
regular local ring of dimension $0$ is a field.  
The length of a field
as a module over itself is $1$. This $1$ is the multiplicity of
$\cV$ appearing in B\'ezout's Theorem, see~\cite[Corollary~2.5]{EisHar:3264} (and the follow-up comments)
and~\cite[Section~1.2.1]{EisHar:3264}.
As  $\cV$ is cut out by
hypersurfaces of degree $1,2,\ldots,n$ we obtain
$\deg \cV=n!$.

\section*{Acknowledgement}

The authors would like to thank Alexander Fish for the patient answering their questions 
about the abelianisation map, Boris Kunyavski  for very useful discussions
of problems on commutator subgroups, David Masser for communicating and allowing us to use his example~\eqref{eq:Bad Exp}  in Section~\ref{sec:com}, Peter Sarnak for informing us of~\cite{CRRZ} and Yuri Tschinkel for showing us the relevance of~\cite{C-LT}.

The authors  are  especially grateful to  Per Salberger  for patient explanation of  several issues related to 
the determinant method and to Alexei Skorobogatov  and
Felipe Voloch for 
detailed clarifications of some algebraic geometry issues.

The first-named author acknowledges
support by  the Swiss National Science Foundation grant “Rational
points, arithmetic dynamics, and heights” (Nr. 200020\_219397). The second and third-named authors were  supported, in part, by the Australian Research Council
Grants DP200100355 and DP230100530.  The
second-named author gratefully acknowledges the hospitality and support of
the Max Planck Institute for Mathematics in Bonn, where parts of this
work has been carried out.

\end{document}